\newcommand{\RR}{\mathbb{R}}
\newcommand{\ud}{\textup{d}}
\DeclareMathOperator{\wind}{wind}
\begin{document}

\newtheorem{theorem}{Theorem}
\newtheorem*{proposition}{Proposition}
\newtheorem*{thm}{Theorem}
\newtheorem*{corollary}{Corollary}
\newtheorem*{conjecture}{Conjecture}
\newtheorem{lemma}{Lemma}

\title[Carrier frequencies, holomorphy and unwinding]{Carrier frequencies, holomorphy and unwinding}

\author{Ronald R. Coifman, Stefan Steinerberger \and Hau-tieng Wu}
\address{Department of Mathematics, Program in Applied Mathematics, Yale University, 51 Prospect Street, CT 06511, USA}
\email{coifman@math.yale.edu}

\address{Department of Mathematics, Yale University, 10 Hillhouse Avenue, CT 06511, USA}
\email{stefan.steinerberger@yale.edu}

\address{Department of Mathematics, University of Toronto, 40 St George Street, Toronto, ON M5S 2E4, Canada}
\email{hauwu@math.toronto.edu}

\maketitle
 
\vspace{-20pt}

\begin{abstract} We prove that functions of intrinsic-mode type (a classical models for signals) behave essentially like holomorphic functions: adding a pure carrier frequency $e^{int}$
ensures  that the anti-holomorphic part is much smaller than the holomorphic part
$ \| P_{-}(f)\|_{L^2} \ll \|P_{+}(f)\|_{L^2}.$
This enables us to use techniques from complex analysis, in particular the \textit{unwinding series}. We study its stability and convergence properties and show that the unwinding series can stabilize and show that the unwinding series can provide a high resolution time-frequency representation, which is robust to noise. 
\end{abstract}

\section{Introduction} 

\subsection{Introduction}
Time-frequency analysis is at the very core of signal processing and widely used in
applications (MP3 audio, JPEG images, ...). The simplest possible example of a 'signal'
is certainly a cosine polynomial, i.e. a superposition of stationary signals
$$ f(t) = a_0 + \sum_{k=1}^{K}{a_k \cos{(2\pi k t)}}$$
and classical Fourier analysis allows for the analysis of such signals. However, in real-life
applications this assumption on the form of the signal is overly restrictive and both frequencies
and amplitudes can slowly shift over time. This notion is usually either attributed to Gabor \cite{Gabor:1946}
or van der Pol \cite{vanderPol:1946}. Given a real signal $G(t)$, Gabor introduced the complex signal extension
$$ G^c = G + i \mathcal{H} G, \qquad \mbox{where $\mathcal{H}$ is the Hilbert transform}.$$
This complex signal can now be written in polar coordinates as
$$ G^c(t) = a(t) e^{i \phi(t)},$$
where $a(t)\geq 0$ and $\phi'(t)$ are the natural quantities of interest, called the \textit{amplitude modulation} (AM) and the \textit{instantaneous frequency} (IF). Ultimately,
we are interested in obtaining stable ways of estimating the instantaneous frequency $\phi'(t)$ for a given signal.

\subsection{Related work}

In the past decays, several approaches are proposed to deal with this problem. These approaches could be roughly classified into two categories -- one is decomposing the signal into oscillatory ingredients first and extract the amplitude modulation (AM) and intrinsic frequency (IF) information while the other one tries to obtain the time-frequency (TF) representation. In the first category, examples include the empirical mode decomposition (EMD) \cite{Huang_Shen_Long_Wu_Shih_Zheng_Yen_Tung_Liu:1998} and its variations like \cite{Dragomiretskiy_Zosso:2014,Gilles:2013,Pustelnik_Borgnat_Flandrin:2014,Wu_Huang:2009}, the sparsity approach \cite{Tavallali_Hou_Shi:2014}, the iterative convolution-filtering approach \cite{Cicone_Liu_Zhou:2016,Lin_Wang_Zhou:2009}, the approximation approach \cite{Chui_Mhaskar:2016}, etc. The oscillatory components that arise out of the decomposition are often called the ``intrinsic mode functions''. If the IF and AM information is not obtained while decomposing the signal, a different transform, like the Hilbert transform, is needed to estimate the IF and AM from the intrinsic mode functions. In the second category, we find the well-known linear TF analysis, like short time Fourier transform (STFT) \cite{Flandrin:1999,Flandrin:2015}, continuous wavelet transform (CWT) \cite{Daubechies:1992}, Chirplet transform \cite{Mann_Haykin:1995}, S-transform \cite{Stockwell_Mansinha_Lowe:1996}, etc, the quadratic TF analysis, like the Wigner-Ville distribution and its generalization like Cohen's class or Affine class \cite{Flandrin:1999}, and the nonlinear variation of these methods like the reassignment method and its variation \cite{Auger_Chassande-Mottin_Flandrin:2012,Auger_Flandrin:1995,Kodera_Gendrin_Villedary:1978}, the synchrosqueezing transform (SST) \cite{Daubechies_Lu_Wu:2011,Daubechies_Maes:1996}, the concentration of frequency and time (ConceFT) \cite{Daubechies_Wang_Wu:2016}, the scatter transform \cite{Mallat:2012}, the variation of the Gabor transform \cite{Balazs_Dorfler_Jaillet_Holighaus_Velasco:2011,Galiano_Velasco:2014,Ricaud_Stempfel_Torresani:2014}, the cepstrum-based approach \cite{Lin_Su_Wu:2016}, etc. After obtaining the time-frequency representation of the signal, the IF and AM can be extracted.

\section{IMT functions are almost holomorphic}
\subsection{Intrinsic mode type function.} It is obvious that in order for the problem to be well-posed, one needs to put some additional restriction on
the various components (otherwise the reconstruction problem has too many degrees of freedom). A classical
approach is restricting every single component to be close to an intrinsic-mode function. The following definition
is the natural adaption of \cite[Definition 3.1.]{Daubechies_Lu_Wu:2011} to the periodic setting.
\begin{quote}
\textbf{Definition.} A periodic, continuous function $f:\mathbb{R} \rightarrow \mathbb{C}$ is said to be an \textit{intrinsic-mode type} (IMT)
with accuracy $\varepsilon > 0$ if $f(t) = A(t) e^{i \phi(t)}$ with
\begin{align*}
&A \in C^1(\mathbb{T}, \mathbb{R}_{+}), \quad \phi \in C^2(\mathbb{T}, \mathbb{T}) \\
&\inf_{t \in \mathbb{R}}{ \phi'(t)} >0 \qquad \sup_{t \in \mathbb{R}}{\phi'(t)} < \infty \\
& |A'(t)| \leq \varepsilon \phi'(t) \qquad \quad |\phi''(t)| \leq \varepsilon \phi'(t)  
\end{align*}
\end{quote}
Geometrically, the function $f: \mathbb{T} \rightarrow \mathbb{C}$ winds counter-clockwise around the origin and is only able
to change its distance to the origin slowly compared to its instantaneous frequency $\phi'(t)$.
This setup is natural for a variety of different reasons; it would, for example, be impossible to get a decent understanding of the
intrinsic frequency $\phi'$ using merely a finite number of samples unless there was some control on the behavior of $\phi'$ between
samples (here exerted by $|\phi''| \leq \varepsilon |\phi'|$).\\

\begin{center}
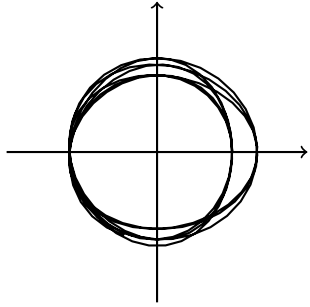
\begin{figure}[h!]
\begin{tikzpicture}
\draw [->, thick] (-2,0) -- (2,0);
\draw [->, thick] (0,-2) -- (0,2);
\draw [black, thick,  domain=-0:24.1, samples=200] plot ({(1+sin(25*\x)^2/3)*cos(100*\x)},{(1+sin(20*\x)^2/4)*sin(100*\x)});
\end{tikzpicture}
\caption{$f(e^{it})$ for a function of intrinsic-mode type: amplitude changes slowly with respect to phase.}
\end{figure}
\end{center}

\subsection{Main result.} Our contribution is an elementary estimate that guarantees that functions of intrinsic-mode type are very close 
to being holomorphic. We quantify this notion using the Littlewood-Paley projections $P_{-}, P_{+}$ onto negative and nonnegative 
frequencies, respectively. This allows us to decompose any function into
$$ f = P_{+}f + P_{-}f \qquad \mbox{a holomorphic and an anti-holomorphic part.}$$
Our main statement says that under the assumptions above, only a small part of the function can be anti-holomorphic (in particular,
projection onto holomorphic function has very little impact).

\begin{theorem}[Intrinsic mode functions are almost holomorphic]\label{Theorem:CarrierPhase} Let $f = A(t)e^{i \phi(t)}$ be an intrinsic-mode function with accuracy
$\varepsilon$. Then 
$$\| P_{-} f\|_{L^2}^2  \leq \left( \frac{8 \pi^2}{\|A\|_{L^2}^2} \frac{\|A'\|_{L^{\infty}}^2 + \varepsilon^2  \|A\|_{L^{\infty}}^2}{\inf_{0 < t < 2\pi}{\phi'(t)}}   \right) \|f\|_{L^2}^2$$
\end{theorem}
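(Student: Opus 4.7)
The plan is to estimate each negative Fourier coefficient of $f$ via non-stationary phase (a single integration by parts, exploiting that the phase derivative $\phi'(t)+n$ stays strictly positive for $n\geq 1$), and then to assemble these bounds into the desired $L^2$ inequality via Parseval.

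First I would write, for $n\geq 1$,
$$\widehat{f}(-n)=\frac{1}{2\pi}\int_0^{2\pi} A(t)\,e^{i(\phi(t)+nt)}\,\ud t,$$
and view the integrand as $\frac{A(t)}{i(\phi'(t)+n)}\,\frac{\ud}{\ud t}e^{i(\phi(t)+nt)}$. Integrating by parts, the boundary term vanishes by periodicity (we use that $A$ and $\phi'$ are periodic and that $e^{i(\phi(2\pi)+2\pi n)}=e^{i\phi(0)}$ since $\phi:\mathbb{T}\to\mathbb{T}$ and $n\in\ZZ$), leaving
$$\widehat{f}(-n)=\frac{i}{2\pi}\int_0^{2\pi}\!\left(\frac{A'(t)}{\phi'(t)+n}-\frac{A(t)\,\phi''(t)}{(\phi'(t)+n)^2}\right)e^{i(\phi(t)+nt)}\,\ud t.$$
A Cauchy--Schwarz step together with $(a-b)^2\leq 2(a^2+b^2)$ then yields
$$|\widehat{f}(-n)|^2\leq \frac{1}{\pi}\int_0^{2\pi}\!\left(\frac{|A'(t)|^2}{(\phi'(t)+n)^2}+\frac{A(t)^2\,\phi''(t)^2}{(\phi'(t)+n)^4}\right)\ud t.$$

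Next I would apply Parseval, $\|P_-f\|_{L^2}^2=2\pi\sum_{n\geq 1}|\widehat{f}(-n)|^2$, exchange sum and integral, and invoke the elementary estimates
$$\sum_{n\geq 1}\frac{1}{(\phi'(t)+n)^2}\leq\frac{1}{\phi'(t)},\qquad \sum_{n\geq 1}\frac{1}{(\phi'(t)+n)^4}\leq\frac{1}{\phi'(t)},$$
(both obtained by comparison with $\int_0^\infty(\phi'+u)^{-k}\,\ud u$). This reduces the bound to a single integral of the form $\int\phi'(t)^{-1}(|A'|^2+A^2(\phi'')^2)\,\ud t$.

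Finally I would use the IMT hypothesis $|\phi''|\leq\varepsilon\phi'$ to cancel one factor of $\phi'$ in the second term, replace the remaining $|A'|$ and $|A|$ by their $L^\infty$ norms, pull out the factor $(\inf\phi')^{-1}$, and rewrite the result in terms of $\|f\|_{L^2}^2=\|A\|_{L^2}^2$ by multiplying and dividing by $\|A\|_{L^2}^2$. The only genuine analytic input is the single integration by parts; everything else is bookkeeping. I expect the main obstacle to be purely notational, namely tracking the constants carefully enough to land on exactly the factor $8\pi^2/\|A\|_{L^2}^2$ that appears in the statement.
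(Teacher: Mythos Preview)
Your strategy is exactly the paper's: one integration by parts exploiting $\phi'+n>0$, then Parseval and a telescoping sum estimate. The only substantive difference is that the paper bounds the integrand in $L^\infty$ (so $|\int|\leq 2\pi\sup$) whereas you use Cauchy--Schwarz inside the integral and exchange sum and integral; your route is in fact slightly sharper and, once fixed, gives a constant of order $4\pi$ rather than $8\pi^2$.

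There is, however, one genuine slip in your handling of the second term. You write $\sum_{n\geq 1}(\phi'(t)+n)^{-4}\leq 1/\phi'(t)$ and then plan to ``cancel one factor of $\phi'$'' using $|\phi''|\leq\varepsilon\phi'$. But after that sum bound you have $A^2(\phi'')^2/\phi'$, and substituting $(\phi'')^2\leq\varepsilon^2(\phi')^2$ leaves $A^2\varepsilon^2\phi'$ with $\phi'$ in the \emph{numerator}; you can no longer pull out $(\inf\phi')^{-1}$. The integral comparison you invoke actually gives the sharper bound $\sum_{n\geq 1}(\phi'(t)+n)^{-4}\leq \tfrac{1}{3}\phi'(t)^{-3}$, and with this the second term becomes $A^2(\phi'')^2/(3(\phi')^3)\leq A^2\varepsilon^2/(3\phi')$, which is what you need. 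Alternatively, do what the paper does: apply the IMT hypothesis \emph{before} summing, via $|\phi''|\leq\varepsilon\phi'\leq\varepsilon(\phi'+n)$, which collapses the denominator from $(\phi'+n)^4$ to $(\phi'+n)^2$ and reduces both terms to the same form. Either repair is immediate; the rest of your bookkeeping is correct.
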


The statement by itself might not seem very useful as, in general, one may not have any explicit control on these quantities and can
thus not guarantee that it is small. However, in the usual setting of $\phi'(t)$ being uniformly large and the amplitude only undergoing
slow changes, the bound indeed states that any function with small variation in the amplitude that moves counterclockwise 
around the origin sufficiently fast is close to being holomorphic.

\begin{theorem}[Small error in the phase] Given a signal $f(t) = A(t) e^{i \phi(t)}$, we use $\phi^*$ to denote the phase of its holomorphic projection
$$ P_{+} (A(t) e^{i \phi(t)}) = |P_{+} (A(t) e^{i \phi(t)})| e^{i \phi^*(t)}.$$
 Then we can control the error
$$ \| \phi(t) - \phi^*(t) \|^2_{L^2} \leq  \left( \frac{8 \pi^4}{\|A\|_{L^2}^2} \frac{\|A'\|_{L^{\infty}}^2 + \varepsilon^2 \|A\|_{L^{\infty}}^2}{\inf_{0 < t < 2\pi}{|A(t)|^2}} \frac{1}{\inf_{0 < t < 2\pi}{\phi'(t)}}  \right) \|f\|_{L^2}^2.$$
\end{theorem}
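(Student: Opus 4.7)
The plan is to pass from the $L^2$ smallness of $P_-f$ (which Theorem~\ref{Theorem:CarrierPhase} provides) to a pointwise bound on $|\phi - \phi^*|$ via a pointwise estimate on $|e^{i\phi} - e^{i\phi^*}|$, and then integrate. First I would set $B := |P_{+}f|$, so that $P_{+}f = B e^{i\phi^*}$. The decomposition $f = P_{+}f + P_{-}f$ combined with $f = Ae^{i\phi}$ gives $Ae^{i\phi} - Be^{i\phi^*} = P_{-}f$. Adding and subtracting $A e^{i\phi^*}$ on the left yields
\begin{equation*}
A\bigl(e^{i\phi} - e^{i\phi^*}\bigr) = P_{-}f - (A - B)\, e^{i\phi^*}.
\end{equation*}
The reverse triangle inequality applied to $f - P_{+}f = P_{-}f$ gives $|A - B| = \bigl||f| - |P_{+}f|\bigr| \leq |P_{-}f|$, so pointwise $A(t)\, |e^{i\phi(t)} - e^{i\phi^*(t)}| \leq 2|P_{-}f(t)|$.

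Next I would convert this into a bound on $|\phi - \phi^*|$ using $|e^{i\phi} - e^{i\phi^*}|^2 = 4 \sin^2((\phi - \phi^*)/2)$. Choosing the pointwise representative $\phi - \phi^* \in [-\pi, \pi]$, so that $(\phi - \phi^*)/2 \in [-\pi/2, \pi/2]$, Jordan's inequality $|\sin y| \geq 2|y|/\pi$ rearranges to $(2y)^2 \leq \pi^2 \sin^2 y$, which with $y = (\phi-\phi^*)/2$ yields
\begin{equation*}
(\phi(t) - \phi^*(t))^2 \,\leq\, \pi^2 \sin^2\!\bigl((\phi(t) - \phi^*(t))/2\bigr) \,\leq\, \frac{\pi^2\, |P_{-}f(t)|^2}{A(t)^2}.
\end{equation*}

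Integrating and using $A(t)^2 \geq \inf_s |A(s)|^2$ gives $\|\phi - \phi^*\|_{L^2}^2 \leq \pi^2 \|P_{-}f\|_{L^2}^2 / \inf_s |A(s)|^2$; substituting Theorem~\ref{Theorem:CarrierPhase} produces the claimed bound with constant $\pi^2 \cdot 8\pi^2 = 8\pi^4$.

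The main obstacle I anticipate is the branch ambiguity in $\phi - \phi^*$, since both phases are only well-defined modulo $2\pi$: one must argue that the pointwise choice of representative in $[-\pi, \pi]$ is what is meant by the $L^2$ norm in the statement. This should be harmless in the informative regime, since the bound $A\,|e^{i\phi} - e^{i\phi^*}| \leq 2|P_{-}f|$ already confines $\phi - \phi^*$ to a small neighborhood of $0$ modulo $2\pi$ whenever $|P_{-}f|/A$ is small, making the choice of representative unambiguous and compatible with continuity; in the opposite regime the stated bound is vacuous, so one loses nothing by interpreting $\phi - \phi^*$ this way throughout.
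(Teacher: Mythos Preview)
Your proposal is correct and follows essentially the same strategy as the paper: establish the pointwise inequality $|\phi(t)-\phi^*(t)|\le \pi\,|P_-f(t)|/A(t)$, integrate, and invoke Theorem~\ref{Theorem:CarrierPhase}. The only cosmetic difference is that the paper derives this pointwise bound by a direct geometric argument (the distance from $A e^{i\phi}$ to the ray in direction $e^{i\phi^*}$ is at least $A\sin|\alpha|$, then concavity of sine), whereas you reach it via the algebraic splitting and the reverse triangle inequality together with Jordan's inequality; both routes produce the identical constant $8\pi^4$, and your remark on the branch ambiguity matches the paper's tacit convention of measuring $\phi-\phi^*$ on the torus.
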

The very important consequence is that if we decide to add a carrier frequency and replace 
$$ \mbox{the signal}~ A(t)e^{i \phi(t)} \qquad \mbox{by the new signal} \qquad A(t) e^{i \phi(t)} e^{i N t}$$
for some $N \in \mathbb{N}$, then we can immediately deduce the phase of one from the other (by subtracting $Nt$), however,
\textit{the bound in the corollary scales in our favor}: the amplitude function $A(t)$ does not change at all but $\inf_{0 < t < 2\pi}{\phi'(t)}$
increases by at least $N$, which guarantees that the error we make is smaller.
Indeed, the error will tend to 0 as $N \rightarrow \infty$. It is not difficult to see that
$$\forall f \in L^2 \qquad \qquad \lim_{N \rightarrow \infty}{P_{+}(e^{i N t} f(t))e^{- i N t}} = f$$
but this convergence \textit{is not uniform}. Our main statement guarantees that there is some uniform control within
the class of functions of IMT type using precisely those quantities that are being used to classify IMT functions.

\subsection{Adding carrier frequencies.} 
 Therefore, given any method that requires signals to be 
holomorphic for them to be analyzed, we may proceed as follows:
\begin{enumerate}
\item Let $f(t)= A(t)e^{i \phi(t)} $ be a signal to be analyzed.
\item Add a carrier frequency $A(t) e^{i \phi(t)} e^{i N t}$.
\item Project that function onto holomorphic functions $P_{+} (A(t) e^{i \phi(t)} e^{i N t})$.
\item Find the phase $\phi^*(t)$ of the holomorphic function $P_{+} (A(t) e^{i \phi(t)} e^{i N t})$.
\item Use $\phi^*(t) - Nt$ as approximation of the phase $\phi(t)$.
\end{enumerate}
In particular, as $N$ increases the desired function moves closer to the subspace of holomorphic functions and the projection in
step (3) has less of an impact. Therefore, one can (at least in theory) assume any function to be holomorphic up to an arbitrarily small error. This pre-processing technique is, of course, not restricted to particular applications but may prove advantageous for a variety of
different techniques: further below, we give numerical examples showing its effect on the synchrosqueezing transform (see also \cite{Xi_Cao_Chen_Zhang_Jin:2015}).

\section{Nonlinear phase unwinding via Blaschke series}

\subsection{The unwinding series.} These results allow us to apply purely complex analysis methods that require holomorphic input to arbitrary IMT signals after adding a suitable carrier signal. We believe that one of most natural ways
of analyzing complex signals is the \textit{unwinding series}. If $F:\mathbb{C} \rightarrow \mathbb{C}$ is a holomorphic signal (or the complexification of a real signal in the sense of Gabor), then we have classical Fourier series
at our disposal: one particularly simple derivation is based on deriving them from de Moivre's identity and a power series expansion. Trivially,
$$ F(z) = F(0) + (F(z) - F(0)).$$
Since $F(z) - F(0)$ vanishes in 0, it has a root there and we can write it as $F(z) - F(0) = z F_1(z)$ for another holomorphic $F_1$. Reiterating the procedure, we get
\begin{align*}
F(z) &= F(0) + (F(z) - F(0)) \\
&= F(0) + z F_1(z) \\
&= F(0) + z F_1(0) + z^2 F_2(0) + z^3 F_3(0) + \dots
\end{align*}
Setting now $z = e^{it}$, we have found that
$$ F( e^{it} ) =\sum_{k=0}^{\infty}{a_k e^{ikt}}.$$
 In the mid 1990s, the first author proposed the following modification: instead of merely factoring out the root at the origin, one could just as well factor out 
all the roots inside the unit disk $\mathbb{D} \subset \mathbb{C}$. The arising factors, \textit{Blaschke products}, are by now classical objects in complex analysis and can be written as
$$ B(z) = z^m \prod_{k}{ \frac{\overline{a_k}}{|a_k|}  \frac{z-\alpha_k}{1-\overline{\alpha_k}z}} \qquad \alpha_k \in \mathbb{D}.$$
The crucial ingredient allowing for factorization is the following classical theorem (established at different levels of regularity for $F$ which we skip for brevity).
\begin{thm}[Blaschke factorization, see e.g. Garnett \cite{Garnett:1981}]
A holomorphic $F:\mathbb{C} \rightarrow \mathbb{C}$ can be written as
$$ F = B \cdot G \qquad \mbox{where}~B~\mbox{is a Blaschke product}$$
and $G$ has no roots inside the unit disk $\mathbb{D}$.
\end{thm}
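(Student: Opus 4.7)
The plan is to construct $B$ as the infinite Blaschke product formed from the zeros of $F$ inside $\mathbb{D}$, and then verify that the quotient $G := F/B$ extends to a holomorphic, zero-free function on the disk.

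First, I would enumerate the zeros $\{\alpha_k\}_{k \geq 1}$ of $F$ in $\mathbb{D}$, counted with multiplicity, separating out the possible zero at the origin of order $m$ so that $z^m$ appears as its own factor. This gives the candidate product
$$ B(z) = z^m \prod_k \frac{\overline{\alpha_k}}{|\alpha_k|} \frac{z - \alpha_k}{1 - \overline{\alpha_k} z}. $$
The analytic question is whether the product converges; the standard fact is that locally uniform convergence on $\mathbb{D}$ is equivalent to the \emph{Blaschke condition} $\sum_k (1 - |\alpha_k|) < \infty$.

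Next, I would derive the Blaschke condition from the (unstated) regularity assumption on $F$, e.g.\ $F \in H^p(\mathbb{D})$ or $F$ continuous up to $\partial\mathbb{D}$. The natural tool is Jensen's formula: assuming $F(0) \neq 0$ (otherwise factor out $z^m$ first),
$$ \log|F(0)| + \sum_{|\alpha_k| < r} \log\frac{r}{|\alpha_k|} = \frac{1}{2\pi} \int_0^{2\pi} \log|F(re^{i\theta})|\, d\theta . $$
Under the regularity hypothesis, the right-hand side is bounded as $r \uparrow 1$, which forces $\sum_k \log(1/|\alpha_k|) < \infty$, and hence the Blaschke condition since $\log(1/|\alpha_k|) \sim 1 - |\alpha_k|$ near the boundary. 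With the Blaschke condition, the standard estimate $|1 - \phi_{\alpha_k}(z)| \leq C_K (1 - |\alpha_k|)$ on compact sets $K \subset \mathbb{D}$ yields locally uniform convergence of the partial products to a holomorphic $B$ with $|B| \leq 1$ on $\mathbb{D}$ and with exactly the prescribed zero set.

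Finally, I would set $G := F/B$. Away from $\{\alpha_k\}$ both $F$ and $B$ are holomorphic and $B$ is nonvanishing, so $G$ is holomorphic there. At each $\alpha_k$, $F$ and $B$ vanish to exactly the same order, so $G$ has a removable singularity and extends holomorphically. By construction $G$ has no zeros in $\mathbb{D}$, completing the decomposition. The genuine obstacle in this argument is the Jensen-based derivation of the Blaschke condition: it is the only step that uses quantitative information about $F$ and is precisely where the regularity class matters. Once that condition is in hand, the remainder is Weierstrass-style bookkeeping.
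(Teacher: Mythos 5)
The paper does not actually prove this statement---it is quoted as a classical theorem and explicitly deferred to Garnett---and your outline is precisely the standard Riesz factorization argument given there (Jensen's formula $\Rightarrow$ Blaschke condition $\Rightarrow$ locally uniform convergence of the partial products $\Rightarrow$ $G=F/B$ holomorphic and zero-free), so it is the right proof and correct in all essentials. Two small remarks: as literally stated for $F$ holomorphic on all of $\mathbb{C}$, a nonvanishing-somewhere $F$ has only finitely many zeros in $\overline{\mathbb{D}}$, so $B$ is a finite product and the Jensen/convergence machinery is only needed in the intended Hardy-space setting; and for the \emph{infinite} product to converge you must normalize each factor to equal $+|\alpha_k|$ at the origin, i.e.\ take $\frac{\overline{\alpha_k}}{|\alpha_k|}\,\frac{\alpha_k-z}{1-\overline{\alpha_k}z}$, rather than the sign convention $\frac{z-\alpha_k}{1-\overline{\alpha_k}z}$ you copied from the paper's display, whose partial products alternate in sign at $z=0$.
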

Formally, an application of this fact repeatedly allows us again to write a function as
\begin{align*}
F(z) &= F(0) + (F(z) - F(0)) \\
&= F(0) + B_1(z) G_1(z) \\
&= F(0) + B_1(z) (G_1(0) + (G_1(z)-G_1(0))) \\
&= F(0) + G_1(0) B_1(z) + G_2(0) B_1(z) B_2(z) + G_3(0) B_1(z) B_2(z) B_3(z) + \dots
\end{align*}
First numerical experiments were carried out in the PhD thesis of Michel Nahon \cite{Nahon:2000Thesis} with further contributions by Letelier \& Saito \cite{Saito_Letelier:2009} and Healy \cite{ Healy:2009b, Healy:2009a}.
The first rigorous proof of convergence was carried out by T. Qian \cite{Qian:2010} for initial data in the Hardy space $\mathcal{H}^2$.
The first two authors \cite{Coifman_Steinerberger:2015} gave a wide range of convergence results: in particular, for all $s > -1/2$ the sequence converges in the Sobolev space $H^{s}(\mathbb{T})$ for initial data
in the Dirichlet space $H^{s+1/2}(\mathbb{T})$.

\subsection{Stability}  
The purpose of this section is to describe a stability result for Blaschke decompositon under white noise. We shall assume that we are given a holomorphic
signal $F(e^{it})$ on the boundary of the unit disk and, by an abuse of notation, can write the Blaschke decomposition as 
$$ F(z) = B(z) \cdot G(z) \qquad \mbox{inside} \quad \mathbb{D}.$$
We will now suppose that $F(e^{it})$ is perturbed by white noise $\Phi$ and that we are given $F(e^{it}) + \Phi$ instead. Denoting the Poisson extension by $\mathcal{P}$
$$ (F + \mathcal{P}\Phi)(z) = B_1(z) \cdot G_1(z),$$
we will investigate how $B$ and $B_1$ are related. Clearly, since $G$ and $G_1$ are outer functions and Blaschke products are determined by their roots, this
amounts to understanding how the roots of 
$$ F(z) \quad \mbox{inside}~\mathbb{D}~\mbox{are related to the (random) roots of}~ (F + \mathcal{P}\Phi)(z).$$
We will now show that $\mathcal{P}\Phi$ is well-behaved away from the origin.

\begin{theorem}[Stability under white noise] We have
\begin{align*}
 (\mathcal{P}\Phi)(z) &=   \mathcal{N} \left(0,   \frac{1}{2\pi}  + \frac{1}{\pi}\frac{|z|^2}{1-|z|^2} \right) ~~\quad \mbox{for}~z \in \mathbb{D} \\
 (\mathcal{P}\Phi)(z) - (\mathcal{P}\Phi)(0)&=   \mathcal{N} \left(0,  \frac{1}{\pi}\frac{|z|^2}{1-|z|^2} \right) \qquad  \qquad \mbox{for}~z \in \mathbb{D} \\
\end{align*}
\end{theorem}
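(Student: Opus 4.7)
\medskip

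\textbf{Proof proposal.} The plan is to reduce everything to a direct $L^{2}$ computation using the Poisson kernel, interpreting white noise $\Phi$ as the Gaussian stochastic integrator for which $\int \varphi\, d\Phi \sim \mathcal{N}(0, c\|\varphi\|_{L^2}^2)$ for every test function $\varphi$ (with the constant $c$ fixed by the chosen normalization of the noise). Since the Poisson extension
$$ (\mathcal{P}\Phi)(z) \;=\; \frac{1}{2\pi}\int_0^{2\pi} P_r(\theta - t)\, d\Phi(t), \qquad z = re^{i\theta},$$
is a deterministic linear functional of $\Phi$, it is automatically a centered Gaussian, and the only thing to compute is its variance.

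The key computational step is the evaluation of $\|P_r\|_{L^2(\mathbb{T})}^2$ via the Fourier expansion
$$ P_r(\theta) \;=\; \sum_{n \in \mathbb{Z}} r^{|n|} e^{in\theta},$$
which by Parseval gives
$$ \frac{1}{2\pi} \int_0^{2\pi} P_r(\theta - t)^2\, dt \;=\; \sum_{n \in \mathbb{Z}} r^{2|n|} \;=\; 1 + \frac{2r^2}{1-r^2}.$$
Translation invariance ensures the answer does not depend on $\theta$. Inserting this into the variance of $\frac{1}{2\pi}\int P_r(\theta-t)\,d\Phi(t)$ yields exactly $\frac{1}{2\pi} + \frac{1}{\pi}\frac{|z|^2}{1-|z|^2}$, which is the first claim.

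For the second identity, note that $(\mathcal{P}\Phi)(0) = \frac{1}{2\pi}\int d\Phi(t)$ is just the zeroth Fourier coefficient of $\Phi$, so
$$ (\mathcal{P}\Phi)(z) - (\mathcal{P}\Phi)(0) \;=\; \frac{1}{2\pi} \int_0^{2\pi} \bigl( P_r(\theta - t) - 1 \bigr) d\Phi(t),$$
and the same Parseval calculation applied to $P_r(\theta) - 1 = \sum_{n \neq 0} r^{|n|} e^{in\theta}$ simply removes the $n=0$ contribution, leaving $\frac{2r^2}{1-r^2}$ in the sum and hence variance $\frac{1}{\pi}\frac{|z|^2}{1-|z|^2}$.

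The only subtle point, and the main thing worth stating carefully, is the meaning of ``white noise $\Phi$'' on $\mathbb{T}$ and the corresponding constant in the isometry $\varphi \mapsto \int \varphi\, d\Phi$; once that is fixed, all subsequent steps are routine Hilbert-space computations, and there is no genuine obstacle. One could alternatively use the equivalent model $\Phi = \sum_{n} g_n e^{int}$ with independent complex Gaussians $g_n$, in which case both formulas follow directly from $\mathcal{P}\Phi(z) = \sum_n g_n r^{|n|} e^{in\theta}$ and the independence of the $g_n$.
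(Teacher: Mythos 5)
Your argument is correct and is essentially the proof in the paper: both reduce the statement to computing the $L^2(\mathbb{T})$ norm of the Poisson kernel (and of the Poisson kernel minus its mean) via the expansion $P_r(\theta)=\sum_n r^{|n|}e^{in\theta}$ and Parseval. The only difference is presentational — you invoke the white-noise $L^2$ isometry directly, while the paper derives the same Gaussian variance by discretizing the boundary into $\varepsilon$-intervals and applying the addition law for independent Gaussians; your remark that the overall constant hinges on the chosen normalization of $\Phi$ is the same caveat the paper handles by fixing $\int_a^b\Phi = \frac{1}{2\pi}\mathcal{N}(0,b-a)$.
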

These properties guarantee that $\mathcal{P}\Phi$ is well-behaved in the interior. We see that, typically, $|(\mathcal{P}\Phi)(z) - (\mathcal{P}\Phi)(0)| \sim |z|$ for small values of $z$.

\subsection{A short glimpse at exponential convergence.}  As has already been pointed out by Nahon \cite{Nahon:2000Thesis}, at least in generic situations the unwinding series seems
to converge exponentially. Existing results \cite{Coifman_Steinerberger:2015,Qian:2010} are very far from showing that. The purpose of this section is to give a heuristic description 
of what could possibly be the dominant underlying dynamics.
\begin{thm}[Special case of Carleson's formula] Let $F:\mathbb{C} \rightarrow \mathbb{C}$ be holomorphic with roots $\left\{\alpha_i:i\in I\right\}$ (where $I$ is some index set) inside $\mathbb{D}$ and Blaschke factorization $ F = B \cdot G$. Then
$$ \int_{\mathbb{D}}{|F'(z)|^2dz} =  \int_{\mathbb{D}}{|G'(z)|^2dz}+ \frac{1}{2}\int_{\partial \mathbb{D}}{|G|^2 \sum_{i \in I}{\frac{1-|a_i|^2}{|z-\alpha_i|^2}} d\sigma},$$
where d$\sigma$ denotes the arclength measure on the boundary of the unit disk $\partial \mathbb{D}$.
\end{thm}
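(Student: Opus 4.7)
The plan is to prove this form of Carleson's identity by starting from the factorization $F = BG$, expanding $|F'|^2$ via the Leibniz rule, and converting the resulting interior integrals that depend on $B$ into a single boundary integral through two successive applications of integration by parts. The key auxiliary function is $u(z) := 1 - |B(z)|^2$, which satisfies $u \ge 0$ in $\mathbb{D}$, $u\equiv 0$ on $\partial\mathbb{D}$, and whose outward normal derivative on the boundary will produce the Poisson-kernel sum in the statement.

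First I would expand, using $|B|^2 = 1-u$,
$$|F'|^2 - |G'|^2 = |B'|^2|G|^2 \;-\; u\,|G'|^2 \;+\; 2\,\mathrm{Re}\bigl(B'\bar B\, G\,\overline{G'}\bigr).$$
The mixed term is handled by noting that $B'\bar B = \partial_z(B\bar B) = -\partial_z u$ and then applying Stokes' theorem to the $1$-form $u\,G\,\overline{G'}\, d\bar z$: the boundary contribution vanishes because $u|_{\partial\mathbb{D}}=0$, and since $\partial_z(G\,\overline{G'}) = |G'|^2$, what remains is $2\,\mathrm{Re}\int_{\mathbb{D}} B'\bar B\,G\,\overline{G'}\,dA = 2\int_{\mathbb{D}} u\,|G'|^2\,dA$. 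After this cancellation,
$$\int_{\mathbb{D}}(|F'|^2 - |G'|^2)\,dA \;=\; \int_{\mathbb{D}}|B'|^2\,|G|^2\,dA \;+\; \int_{\mathbb{D}} u\,|G'|^2\,dA.$$

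Second, I would invoke the two standard identities $\Delta u = -4|B'|^2$ and $\Delta |G|^2 = 4|G'|^2$ (both immediate from $\Delta = 4\partial_z\partial_{\bar z}$ on $|h|^2$ with $h$ holomorphic). The right-hand side is then exactly $-\tfrac14\int_{\mathbb{D}}\bigl(|G|^2\Delta u - u\,\Delta|G|^2\bigr)\,dA$, and Green's second identity, together with $u|_{\partial\mathbb{D}} = 0$, collapses this to
$$-\tfrac14\int_{\partial\mathbb{D}} |G|^2\,\partial_n u\, d\sigma.$$
It remains to identify $\partial_n u$ on the boundary. For a single factor $b_\alpha(z) = (z-\alpha)/(1-\bar\alpha z)$ the classical identity
$$1 - |b_\alpha(z)|^2 \;=\; \frac{(1-|z|^2)(1-|\alpha|^2)}{|1-\bar\alpha z|^2}$$
together with $|1-\bar\alpha z| = |z-\alpha|$ on $\partial\mathbb{D}$ yields $\partial_r(1-|b_\alpha|^2)|_{r=1} = -2\,(1-|\alpha|^2)/|z-\alpha|^2$. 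Summing via $\partial_r|B|^2 = \sum_i \partial_r |b_{\alpha_i}|^2$ on the boundary (valid for finite products) produces the Poisson sum, and the prefactor $-\tfrac14\cdot(-2)$ gives the $\tfrac12$ in the statement.

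The main technical point is not any single calculation but justifying these manipulations when the index set $I$ is infinite: one would truncate to $B_N = \prod_{i\le N} b_{\alpha_i}$, apply the identity to $F_N = B_N G_N$, and pass to the limit. Under the Blaschke condition $\sum(1-|\alpha_i|)<\infty$, $B_N\to B$ locally uniformly in $\mathbb{D}$, so the interior integrals transfer without trouble; on the boundary, the sum $\sum_{i}(1-|\alpha_i|^2)/|z-\alpha_i|^2$ is monotone in $N$, so monotone convergence handles the right-hand side provided $|G|^2$ has the expected integrability on $\partial\mathbb{D}$. This limiting step, rather than any of the differential identities, is where a rigorous write-up must be careful.
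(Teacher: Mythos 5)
The paper does not actually prove this statement: it is quoted as a known special case of Carleson's formula for the Dirichlet integral (the reference to Garnett covers only the Blaschke factorization itself), so there is no in-paper argument to compare against. Your derivation is the standard route to this identity and, as far as I can check, every step is correct: the Leibniz expansion of $|F'|^2$ with $u=1-|B|^2$, the Stokes argument turning $2\,\mathrm{Re}\int_{\mathbb{D}}B'\bar B\,G\,\overline{G'}\,dA$ into $2\int_{\mathbb{D}}u|G'|^2\,dA$ (using $\partial_z\overline{G'}=0$ and $u|_{\partial\mathbb{D}}=0$), the identities $\Delta u=-4|B'|^2$ and $\Delta|G|^2=4|G'|^2$ feeding into Green's second identity, and the computation $\partial_r\bigl(1-|b_\alpha|^2\bigr)\big|_{r=1}=-2(1-|\alpha|^2)/|z-\alpha|^2$ which produces exactly the factor $\tfrac12$. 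You are also right that the only genuinely delicate point is the infinite-product case. One refinement there: after truncating to $B_N$, the cofactor $F/B_N$ still has zeros, which is harmless since your finite-product identity never uses that $G$ is zero-free; but the convergence $\int_{\mathbb{D}}|(F/B_N)'|^2\to\int_{\mathbb{D}}|G'|^2$ does not follow from local uniform convergence alone — you should combine Fatou's lemma on compacta (giving $\liminf_N\int|(F/B_N)'|^2\ge\int|G'|^2$) with the monotone decrease of these integrals forced by the identity itself, and note that both steps presuppose $F$ has finite Dirichlet integral and $|G|^2$ is integrable against the (possibly infinite) boundary weight. With that caveat spelled out, the proof is complete and is, in effect, the classical proof of Carleson's formula restricted to the Blaschke factor.
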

\noindent This immediately implies that the norm of consecutive elements in the Dirichlet space $\left(\| G_n\|_{\mathbb{D}}\right)_{n=1}^{\infty}$ is monotonically decreasing (see \cite{Coifman_Steinerberger:2015} for
a more extensive analysis), however, it is not possible to recover quantitative estimates because the quantities scale differently. Consider, for example, the function $F(z) = 2z + z^n$ for $n \geq 2$. Clearly, the only
root inside the unit disk is in $0$ and therefore
\begin{align*}
 \int_{\mathbb{D}}{|F'(z)|^2dz} &= \int_{\mathbb{D}}{4+n^2 |z|^{2n-2} dz} =  (n+4)\pi \\
\frac{1}{2}\int_{\partial \mathbb{D}}{|G|^2 \sum_{i \in I}{\frac{1-|a_i|^2}{|z-\alpha_i|^2}} d\sigma} &= \frac{1}{2}\int_{0}^{2\pi}{|F(e^{it})|^2 dt}  = 5\pi.
\end{align*}
We see that for $n$ large, the actual decrease does not correspond to a fixed proportion of the size but can be an arbitrarily large factor smaller than that. Exponential
convergence, however, is still be possible if cases like that do not occur often and if, whenever they occur, they do not occur for a large number of consecutive
iterations.

\begin{center}
\begin{figure}[h!]
\begin{tikzpicture}
\draw [->, thick] (-2,0) -- (2,0);
\draw [->, thick] (0,-2) -- (0,2);
\draw [black, thick,  domain=-0:6.28, samples=500] plot ({cos(100*\x) +  cos(1000*\x)/2  },{sin(100*\x) + sin(1000*\x)/2});

\draw [->, thick] (6-2,0) -- (6+2,0);
\draw [->, thick] (6,-2) -- (6,2);
\draw [black, thick,  domain=-0:6.28, samples=500] plot ({6+0.5*cos(100*\x) + cos(1000*\x) + 0.2*cos(100*\x)} ,{0.8*sin(100*\x) + 0.6*sin(1000*\x) + 0.4*sin(500*\x)  });
\end{tikzpicture}
\caption{$F(\partial \mathbb{D})$ for $f(z) = 2z + z^{10}$ compared to a more 'generic' function.}
\end{figure}
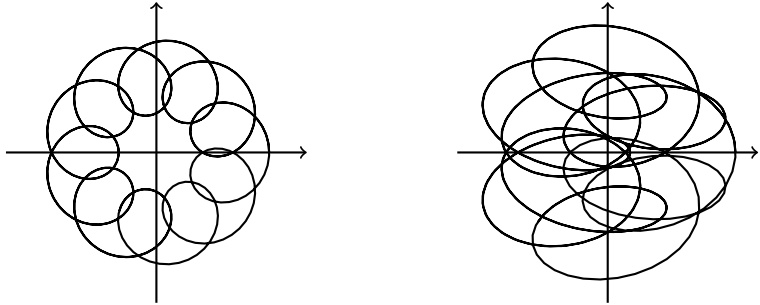
\end{center}

However, it is classical (and sometimes called the 'area-theorem') that the Dirichlet integral measures the area enclosed by $F(\partial \mathbb{D})$ weighted
with the winding number
$$ \int_{\mathbb{D}}{|F'(z)|^2dz}= \int_{\mathbb{C}}{\wind_{F(\partial \mathbb{D})}(z)dz}.$$
Moreover, assuming at least half of the roots of $F$ inside $\mathbb{D}$ to be at least distance $1/100$ from the boundary, we can estimate the weight
by using the argument principle to obtain
$$  \sum_{i \in I}{\frac{1-|a_i|^2}{|z-\alpha_i|^2}}  \gtrsim  \sum_{i \in I}{1} =  \wind_{F(\partial \mathbb{D})}(0).$$
If the winding number around 0 is comparable to the winding number at other points inside the unit disk (this is clearly violated in the case of $F(z) = 2z + z^n$ discussed above) and if $|G|$ does not vary too wildly, then from
elementary geometric considerations
$$ \int_{\partial \mathbb{D}}{|G|^2 \sum_{i \in I}{\frac{1-|a_i|^2}{|z-\alpha_i|^2}} d\sigma}  \gtrsim   \wind_{F(\partial \mathbb{D})}(0) \int_{\partial \mathbb{D}}{|G|^2  d\sigma} \sim  \int_{\mathbb{C}}{\wind_{F(\partial \mathbb{D})}(z)dz}.  $$
Summarizing, assuming the roots not to be too clustered close to the boundary of the unit disk and assuming the winding number around 0 to be not too small compared with the winding
number at other points inside $\mathbb{D}$ and assuming some moderate regularity on $|G|$, the weight gains half a derivative regularity and ensures that the decrease of the Dirichlet norm is proportional to the size. Numerical simulations suggest that violating any of these conditions, as in the case of the example $F(z) = 2z + z^n$, is not stable under Blaschke factorization. While we believe this to be the
underlying dominating dynamics, a systematic and rigorous justification is lacking.

\subsection{Explicit solvability.} There exists a small class of functions for which the unwinding series can be explicitly computed and coincides with the standard Fourier series. The argument
is completely elementary and, unfortunately, does seem to be way too specialized to give insight into the actual nonlinear dynamics at work. 

\begin{proposition} Let $0 \leq n_0 < n_1 < n_2 < \dots$ be a strictly increasing sequence of integers and
$$ f(z) = \sum_{k=0}^{\infty}{a_k z^{n_k}} \qquad \mbox{where, for all $n$,} \qquad |a_n| > \sum_{k=n+1}^{\infty}{|a_k|}.$$
Then the $N-$th term of the unwinding series is given by
$$ f(0) + a_1 B_1 + a_2 B_1 B_2 + \dots + a_N B_1 \cdots B_N = \sum_{k=0}^{N}{a_k z^{n_k}}.$$
\end{proposition}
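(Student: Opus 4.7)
The plan is to proceed by induction on $N$, tracking both the intermediate outer functions $G_k$ and the accumulated Blaschke factors $B_1 \cdots B_k$. Assuming (as the formula implicitly requires) that $n_0 = 0$, so that $f(0) = a_0$, I claim the following holds at every stage $k \geq 0$: the $k$-th remainder satisfies
$$ G_k(z) = a_k + \sum_{j > k} a_j\, z^{\,n_j - n_k}, \qquad B_1(z) B_2(z) \cdots B_k(z) = z^{n_k}. $$
The base case $k=0$ is trivial: $G_0 = f$ by definition, and the empty product is $1 = z^{n_0}$.

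For the inductive step, suppose the claim holds at stage $k$. Then $G_k(0) = a_k$ and
$$ G_k(z) - G_k(0) = \sum_{j > k} a_j\, z^{\,n_j - n_k} = a_{k+1}\, z^{\,n_{k+1}-n_k}\, h_{k+1}(z), $$
where
$$ h_{k+1}(z) = 1 + \sum_{j > k+1} \frac{a_j}{a_{k+1}}\, z^{\,n_j - n_{k+1}}. $$
The key point is that the dominance hypothesis $|a_{k+1}| > \sum_{j > k+1} |a_j|$ gives
$$ |h_{k+1}(z) - 1| \leq \sum_{j > k+1} \frac{|a_j|}{|a_{k+1}|} < 1 \qquad \text{for every } z \in \overline{\mathbb{D}}. $$
Thus $h_{k+1}$ has no zeros on the closed unit disk, and the only zero of $G_k - G_k(0)$ inside $\mathbb{D}$ is the one at the origin, of order $n_{k+1} - n_k$. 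Consequently the Blaschke factorization reduces to $B_{k+1}(z) = z^{\,n_{k+1}-n_k}$ with outer factor $G_{k+1}(z) = a_{k+1} h_{k+1}(z)$, which is exactly the form asserted at stage $k+1$. The accumulated product updates to $B_1 \cdots B_{k+1} = z^{n_k} \cdot z^{n_{k+1}-n_k} = z^{n_{k+1}}$, completing the induction.

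Plugging into the unwinding decomposition, the $(k+1)$-th term is
$$ G_{k+1}(0)\, B_1(z) \cdots B_{k+1}(z) = a_{k+1}\, z^{n_{k+1}}, $$
and summing from $0$ to $N$ yields the claimed identity $\sum_{k=0}^N a_k z^{n_k}$. There is no real obstacle here; the only conceptual step is recognizing that the lacunarity together with the geometric-dominance hypothesis is tailored precisely so that each Blaschke factor is forced to degenerate into a pure monomial $z^{n_{k+1}-n_k}$, so the unwinding series unfolds term by term to match the given power-series expansion.
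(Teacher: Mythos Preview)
Your proof is correct and follows essentially the same approach as the paper: induction, with the key observation that the dominance condition $|a_{k+1}| > \sum_{j>k+1}|a_j|$ forces all zeros of each remainder to lie at the origin, so every Blaschke factor degenerates to a pure monomial. Your write-up is in fact more careful than the paper's, since you explicitly track the outer functions $G_k$ and the accumulated products $B_1\cdots B_k = z^{n_k}$, and you also flag the implicit assumption $n_0=0$ needed for $f(0)=a_0$.
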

\begin{proof} The proof is by induction. Clearly, the first term is $f(0)$. It suffices to remark that all arising roots are always in 0 because
for all $|z| \leq 1$
\begin{align*}
  \left| \sum_{k=N+1}^{\infty}{a_k z^{n_k}} \right| =  \left| a_{N+1}z^{n_{N+1}} + \sum_{k=N+2}^{\infty}{a_k z^{n_k}} \right| = |z|^{n_{N+1}} \left| a_{N+1} + \sum_{k=N+2}^{\infty}{a_k z^{n_k-n_{N+1}}} \right| 
\end{align*}
and we can estimate
$$ \left| a_{N+1} + \sum_{k=N+2}^{\infty}{a_k z^{n_k-n_{N+1}}} \right| \geq \left| a_{N+1} \right|  - \left| \sum_{k=N+2}^{\infty}{a_k z^{n_k-n_{N+1}}} \right| \geq \left| a_{N+1} \right| - \sum_{k=N+2}^{\infty}{|a_k|} > 0.$$
\end{proof}

We note that the condition on the coefficients can be iterated, which gives
$$  |a_n| > \sum_{k=n+1}^{\infty}{|a_k|} >  2\sum_{k=n+2}^{\infty}{|a_k|} > \dots > 2^{m-1} \sum_{k=n+m}^{\infty}{|a_k|} \geq  2^{m-1} |a_{n+m}|,$$
which immediately implies exponential decay of the sequence $(|a_n|)_{n=1}^{\infty}$.

\subsection{Poisson convolution and root detection.}\label{Section:RootDetection} 

One interesting application is the possible reconstruction of the location of various roots. Given a holomorphic function restricted to the boundary, the
interior values are uniquely defined by the Poisson integral
$$ u(r e^{i \theta}) = \frac{1}{2\pi} \int_{-\pi}^{\pi}{\frac{1-r^2}{1-2r\cos{(\theta-t)} + r^2} f(e^{it}) dt}.$$ 
In particular, for every fixed $0 \leq r < 1$, the map to a smaller disk of radius $r$
$$ F \big|_{\partial \mathbb{D}} \rightarrow u(r e^{it})$$
is explicitly given as the convolution with the Poisson kernel of fixed with and easy and fast to compute. Assuming there is no root at distance $r$ from the origin, this gives a new function $u_{r}:\partial\mathbb{D} \rightarrow \mathbb{C}$ and its
Blaschke decomposition 
$$ u_r = B_r G_r \qquad \mbox{where} \quad B_r = z^m\prod_{|\alpha_i| < r}{  \frac{z-\frac{\alpha_i}{r}}{1-\frac{\overline{\alpha_i}}{r} z}}$$
ranges over all roots $|\alpha_i| < r$. Note that its instantaneous frequency is given by
$$ \phi_r'(t) = m + \sum_{|\alpha_i| < r}{ \frac{1-|\alpha_i/r|^2}{|e^{i \theta} - \alpha_i/r|^2}},$$
which is merely the sum over Poisson kernels at rescaled roots: this means that by constructing the Blaschke product, we are able to get a rough impression where
the roots of the function are located. We refer to Section \ref{Section:Numerics:RootDetection} for numerical examples.

\section{Proofs}
\subsection{Proof of Theorem 1.} We wish to estimate the $L^2-$norm contained in the negative Fourier frequencies of $f(t) = A(t)e^{i \phi(t)}$. The crucial insight is that the assumptions
on the function imply decay of the oscillatory integral by the classical van der Corput estimate. 
\begin{proof}
Using Plancherel's theorem gives
$$ \| P_{-} f\|_{L^2(\mathbb{T})}^2 = \sum_{n = 1}^{\infty}{ \left| \int_{0}^{2\pi}{A(t) e^{i \phi(t) + i n t}dt} \right|^2}.$$
We will now estimate this sum term-by-term. Since $\phi' > 0$, we may use integration by parts to get
\begin{align*}
\int_{0}^{2\pi}{A(t) e^{i \phi(t) + i n t}dt} &= \int_{0}^{2\pi}{ \frac{A(t)}{i \phi'(t) + in} \frac{d}{dt} e^{i \phi(t) + i n t}dt} \\
&= -  \int_{0}^{2\pi}{ \left( \frac{A'(t)}{i \phi'(t) + in} - \frac{2 A(t) \phi''(t) }{(i \phi'(t) + i n)^2} \right)  e^{i \phi(t) + i n t}dt}.
\end{align*}
Taking absolute values yields
\begin{align*}
\left| \int_{0}^{2\pi}{A(t) e^{i \phi(t) + i n t}dt} \right| &\leq   \int_{0}^{2\pi}{ \left| \frac{A'(t)}{i \phi'(t) + in} \right| + \left| \frac{2 A(t) \phi''(t) }{(i \phi'(t) + i n)^2} \right| dt}.
\end{align*}
The first term can be easily bounded as
$$ \int_{0}^{2\pi}{ \left| \frac{A'(t)}{i \phi'(t) + in} \right|dt} \leq  \int_{0}^{2\pi}{ \frac{A'(t)}{| \phi'(t) + n|} dt} \leq 2 \pi \sup_{0 \leq t \leq 2\pi}{\frac{A'(t)}{\phi'(t) + n}}.$$
For the second term we use $|\phi''(t)| \leq \varepsilon \phi'(t)$
$$  \int_{0}^{2\pi}{ \left| \frac{2 A(t) \phi''(t) }{(i \phi'(t) + i n)^2} \right| dt} \leq \int_{0}^{2\pi}{ \frac{2 A(t) \varepsilon \phi'(t) }{( \phi'(t) + n)^2} dt} \leq 2\pi \varepsilon \|A\|_{L^{\infty}}  \sup_{0 \leq t \leq 2\pi}{ \frac{\phi'(t)}{( \phi'(t) + n)^2} }.$$
The Cauchy-Schwarz inequality in its most elementary form $(a+b)^2 \leq 2(a^2 + b^2)$ implies
\begin{align*}
 \| P_{-} f\|_{L^2(\mathbb{T})}^2 &= \sum_{n = 1}^{\infty}{ \left| \int_{0}^{2\pi}{A(t) e^{i \phi(t) + i n t}dt} \right|^2} \\
&\leq \sum_{n = 1}^{\infty}{\left(    2 \pi  \sup_{0 \leq t \leq 2\pi}{\frac{A'(t)}{\phi'(t) + n}}  +  2\pi \varepsilon \|A\|_{L^{\infty}} \sup_{0 \leq t \leq 2\pi}{ \frac{\phi'(t)}{( \phi'(t) + n)^2}}  \right)^2 }  \\
&\leq 2\sum_{n = 1}^{\infty}{  \left( 2 \pi  \sup_{0 \leq t \leq 2\pi}{\frac{A'(t)}{\phi'(t) + n}} \right)^2 } + 2\sum_{n = 1}^{\infty}{\left(  2\pi \varepsilon \|A\|_{L^{\infty}} \sup_{0 \leq t \leq 2\pi}{ \frac{\phi'(t)}{( \phi'(t) + n)^2}}  \right)^2}
\end{align*}
The first term can be bounded from above by
\begin{align*}
2\sum_{n = 1}^{\infty}{  \left( 2 \pi  \sup_{0 \leq t \leq 2\pi}{\frac{A'(t)}{\phi'(t) + n}} \right)^2 }  &\leq 8 \pi^2 \|A'\|_{L^{\infty}}^2 \sum_{n = 1}^{\infty}{  \left( \frac{1}{\inf_{0 \leq t \leq 2\pi}{\phi'(t)} + n} \right)^2 } 
\end{align*}
and this sum is easily seen to be dominated by
$$ \sum_{n = 1}^{\infty}{  \left( \frac{1}{\inf_{0 \leq t \leq 2\pi}{\phi'(t)} + n} \right)^2 }  \leq \int_{\inf_{0 \leq t \leq 2\pi}{\phi'(t)}}^{\infty}{\frac{1}{z^2}dz} = \frac{1}{\inf_{0 \leq t \leq 2\pi}{\phi'(t)}}.$$
The second sum can be dealt with analogously
\begin{align*}
 2\sum_{n = 1}^{\infty}{\left(  2\pi \varepsilon \|A\|_{L^{\infty}} \sup_{0 \leq t \leq 2\pi}{ \frac{\phi'(t)}{( \phi'(t) + n)^2}}  \right)^2} &\leq 8 \pi^2 \varepsilon^2  \|A\|_{L^{\infty}}^2 \sum_{n=1}^{\infty}{ \left(\sup_{0 \leq t \leq 2\pi}{\frac{1}{( \phi'(t) + n)} }\right)^2}  \\
&\leq  \frac{8 \pi^2 \varepsilon^2}{\inf_{0 \leq t \leq 2\pi}{\phi'(t)}}.
\end{align*}
Finally, for comparison, we note that trivially
$$ \| f\|_{L^2}^2 =   \int_{0}^{2\pi}{|A(t) e^{i \phi(t)}|^2dt} =   \int_{0}^{2\pi}{|A(t)|^2dt} = \|A\|^2_{L^2}.$$
\end{proof}

\subsection{Proof of Theorem 2.}
\begin{proof} The proof uses an elementary geometric consideration. Fix $0 \leq t \leq 2\pi$ and suppose the phases $\phi(t)$ and $\phi^*(t)$ of
$$ A(t) e^{i \phi(t)} \qquad \mbox{and} \qquad P_{+}(A(t) e^{i \phi(t)} ) = |P_{+}(A(t) e^{i \phi(t)} )| e^{i \phi^*(t)}$$
differ by some angle $\alpha$. Then
$$ | A(t) e^{i \phi(t)} - P_{+}(A(t) e^{i \phi(t)} )| \qquad \mbox{cannot be arbitrarily small.}$$
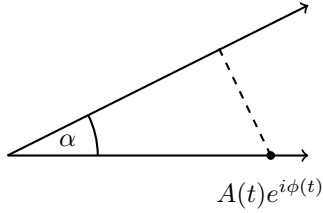
\begin{figure}[h!]
\begin{tikzpicture}
\draw [thick, ->] (0,0) -- (4,0);
\filldraw (3.5,0) circle (0.05cm);
\node at (3.5,-0.5) {$A(t) e^{i \phi(t)}$};
\draw [thick, ->] (0,0) -- (4,2);
\draw [thick] (1.2,0) arc (0:27:1.2cm);
\node at (0.8,0.2) {$\alpha$};
\draw [dashed, thick] (3.5,0) -- (2.8,1.45);
\end{tikzpicture}
\caption{Projection onto another phase.}
\end{figure}

Simple geometric considerations show that we have
$$  | A(t) e^{i \phi(t)} - P_{+}(A(t) e^{i \phi(t)} )| \geq \begin{cases} 
A(t)\sin{\alpha} \qquad &\mbox{if}~ -\frac{\pi}{2} \leq \alpha \leq \frac{\pi}{2}\\
A(t) \qquad &\mbox{otherwise.}
\end{cases}$$
Note that, by convention, the distance between the phases on the torus $|\phi(t) - \phi^*(t)|_{\mathbb{T}}$ is at most $\pi$ and thus
using the concavity of $\sin{x}$ on that interval, we get that
\begin{align*}
  | A(t) e^{i \phi(t)} - P_{+}(A(t) e^{i \phi(t)} )| &\geq   \begin{cases} 
A(t) \sin{\left( |\phi(t) - \phi^*(t)|_{\mathbb{T}} \right)} \qquad &\mbox{if}~ 0 \leq \alpha \leq \frac{\pi}{2}\\
A(t) \qquad &\mbox{if}~ \frac{\pi}{2} \leq \alpha \leq \pi \end{cases} \\
&\geq A(t) \frac{ |\phi(t) - \phi^*(t)|_{\mathbb{T}}}{\pi} \\
&\geq \left( \inf_{0 < t < 2\pi}{A(t)} \right) \frac{ |\phi(t) - \phi^*(t)|_{\mathbb{T}}}{\pi}.
\end{align*}
Rearranging, taking squares and integrating over $\mathbb{T}$ implies that
$$ \| \phi(t) - \phi^*(t)\|_{L^2(\mathbb{T})}^2  \leq \frac{\pi^2}{  \inf_{0 < t < 2\pi}{A(t)^2}} \|  A(t) e^{i \phi(t)} - P_{+}(A(t) e^{i \phi(t)} ) \|_{L^2(\mathbb{T})^2}^2.$$
However, since $P_{+}$ is an orthogonal projection and $P_{+} \oplus P_{-} = \mbox{id}$, we get from the Phytagorean theorem that
$$\|  A(t) e^{i \phi(t)} - P_{+}(A(t) e^{i \phi(t)} ) \|_{L^2(\mathbb{T})^2}^2 = \| P_{-}(A(t) e^{i \phi(t)} )\|_{L^2(\mathbb{T})}^2.$$
Our main result now implies the desired statement.
\end{proof}

\subsection{Proof of Theorem 3}
\begin{proof} Our proof consists of a detailed analysis of the Poisson extension of white noise $\mathcal{P}\Phi$. We fix a $\varepsilon > 0$ arbitrary and
only consider $\mathcal{P}\Phi$ in the smaller disk $B(0, 1-\varepsilon)$. We will proceed by breaking up the boundary into $\sim \varepsilon^{-1}$ 
intervals of length $\varepsilon$, performing an analysis and then sending $\varepsilon$ to 0. There are various ways of introducing the precise structure of white noise $\Phi$, however, our argument actually only requires that (1) it exists as a stochastic process. that (2) for all intervals $[a,b] \subset \mathbb{T} \cong [0, 2\pi]$
$$ \int_{a}^{b}{\Phi(t) dt} = \frac{1}{2\pi}\mathcal{N}(0, b-a)$$
and that (3) for disjoint intervals the arising two random variables are independent.
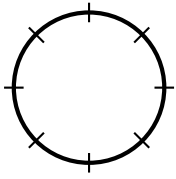
\begin{figure}[h!]
\begin{tikzpicture}
\draw [ultra thick] (0,0) circle (1cm);
\draw [thick] (0.87,0) -- (1.13,0);
\draw [thick] (-0.87,0) -- (-1.13,0);
\draw [thick] (0,0.87) -- (0,1.13);
\draw [thick] (0,-0.87) -- (0,-1.13);
\draw [thick] (0.6,0.6) -- (0.8,0.8);
\draw [thick] (-0.6,0.6) -- (-0.8,0.8);
\draw [thick] (0.6,-0.6) -- (0.8,-0.8);
\draw [thick] (-0.6,-0.6) -- (-0.8,-0.8);
\end{tikzpicture}
\caption{Breaking up the boundary into $\sim \varepsilon^{-1}$ intervals of length $\varepsilon$.}
\end{figure}

The Poisson kernel is explicitly given by
$$ P_r(\theta) = \frac{1}{2\pi} \frac{1-r^2}{1 - 2r \cos{\theta} + r^2}$$
we can easily get the correct asymptotics and can deduce that the measure induced on the boundary by the Poisson kernel associated to a point $z \in \mathbb{D}$ has most of its support
on an interval of length $\sim 1-|z|.$ We recall the addition law for independent Gaussian variables
$$ a\mathcal{N}(\mu_1, \sigma_1) +  b\mathcal{N}(\mu_2, \sigma_2)  \sim  \mathcal{N}(a \mu_1 + b\mu_2, a^2\sigma_1 + b^2\sigma_2)$$  
from which it follows, by taking $\varepsilon$ sufficiently small, that
$$  (\mathcal{P}\Phi)(z) = \mathcal{N}\left( 0, \int_{0}^{2\pi}{P_r(\theta)^2 d\theta}\right).$$
This requires us to determine the $L^2-$norm of the Poisson kernel. We use the representation
$$ P_r(\theta) = \frac{1}{2\pi} \frac{1-r^2}{1 - 2r \cos{\theta} + r^2} = \frac{1}{2\pi}\left(1 + \sum_{n \neq 0}{|r|^n e^{in \theta}}\right)$$
to compute 
\begin{align*}
 \int_{0}^{2\pi}{  \frac{1}{4\pi^2}\left(1 + \sum_{n \neq 0}{|r|^n e^{in \theta}}\right)^2 d\theta}&= \frac{1}{4\pi^2}\int_{0}^{2\pi}{
\left(1 + \sum_{n \neq 0}{|r|^n e^{in \theta}}\right)^2 d\theta} \\
&= \frac{1}{2\pi} + \frac{1}{4\pi^2}\int_{0}^{2\pi}{ \left( \sum_{n \neq 0}{|r|^n e^{in \theta}}\right)^2 d\theta} \\
&=  \frac{1}{2\pi}  + \frac{1}{2\pi}{\sum_{n \neq 0}{|r|^{2n}}} \\
&=  \frac{1}{2\pi}  + \frac{1}{\pi}\frac{r^2}{1-r^2}.
\end{align*}
The second statement can be proven in a similar way. Note that
$$\mathcal{P}\Phi(z) - \mathcal{P}\Phi(0) = \mathcal{N}\left( 0, \int_{0}^{2\pi}{\left(P_r(\theta)-\frac{1}{2\pi}\right)^2 d\theta}\right).$$
The same computation as before now gives
\begin{align*}
\int_{0}^{2\pi}{ \left(P_r(\theta)-\frac{1}{2\pi}\right)^2  d\theta} &= \int_{0}^{2\pi}{         \left(\frac{1}{2\pi}    \sum_{n \neq 0}{|r|^n e^{in \theta}}    \right)^2          d\theta} \\
&= \frac{1}{4\pi^2}\int_{0}^{2\pi}{\left(\sum_{n \neq 0}{|r|^n e^{in \theta}}\right)^2 d\theta} \\
&= \frac{1}{\pi} \frac{r^2}{1-r^2}.
\end{align*}
\end{proof}

The final part of the previous statement serves as an explanation for the stability of the Blaschke product under perturbation by
additive white noise since
$$ (F + \mathcal{P}\Phi)(z) - (F + \mathcal{P}\Phi)(0) = (F(z) - F(0)) + (\mathcal{P}\Phi)(z) - (\mathcal{P}\Phi)(0)).$$
The function $F(z) - F(0)$ has roots that are now perturbed by adding another function. However, since 
$$ \mathbb{E}\left|(\mathcal{P}\Phi)(z) - \mathcal{P}\Phi)(0))\right| \sim |z| \qquad \mbox{for}~ \left|z \right| \leq \frac{3}{4},$$
we see that the function being added is very small in any small neighborhood of the origin. The precise effect

\section{Numerical examples}

We show numerical results of the proposed algorithm. The Matlab code and simulated data are available upon request. 
Given a signal $f$ defined on $\mathbb{T}$ and denote $F:=P_+f$. We use the following notations for the Blaschke decomposition algorithm. Denote $G_0:=F$. \begin{enumerate}
\item Fix $i=0$.
\item Decompose $G_i=L_{i+1}+H_{i+1}$, where $L_{i+1}:=\mathfrak{F}_DG_i$ and $H_{i+1}:=(1-\mathfrak{F}_D)G_i$, $\mathfrak{F}_D$ means a chosen low pass filter by the $(D-1)$-th order polynomial, and $D\in \mathbb{N}$ is determined by the user. 
\item Apply the Blaschke decomposition on $H_{i+1}$ and get $H_{i+1}=B_{i+1}G_{i+1}$.
\item Set $i\leftarrow i+1$ and iterate (2)-(3) for $K$ times, where $K\in\mathbb{N}$ is determined by the user.  
\end{enumerate}
We view $L_1$ as the ``local DC'' term of the signal $f$, 
\[
\tilde{f}_l:=L_{l+1}\prod_{k=1}^lB_k 
\]
as the $l$-th decomposed oscillatory component with the amplitude modulation (AM) $L_{l+1}$ with the instantaneous frequency (IF) determined by the derivative of the phase of $\prod_{k=1}^lB_k$, where $l=1,2,\ldots$. 
Note that when $D=1$, the effect of low pass filter is removing the mean of $G_i$, while when $D>1$, a $(D-1)$-order polynomial is fitted to $G_i$. Clearly, when $i>1$, the removed mean or $(D-1)$-order polynomial becomes the amplitude of $\tilde{f}_{i-1}$. 
We apply the Guido and Mary Weiss algorithm \cite{Weiss_Weiss:1962} to estimate the Blaschke decomposition of a given function $F$. When we evaluate the Blaschke decomposition of $F$ as $F=BG$ by $G:=e^{P_+(\ln|F|)}$, we might encounter $0$ inside the log function. To stabilize the numerical evaluation, we evaluate $G$ by 
\[
G=e^{P_+(\ln\sqrt{|F|^2+\epsilon^2})}, 
\]
where $\epsilon>0$ is chosen by the user.
There are several different ways of estimating the AM and IF of the decomposed component, like SST or the phase gradient estimation \cite{Nahon:2000Thesis}. 

\begin{figure}[ht]
\includegraphics[width=.95\textwidth]{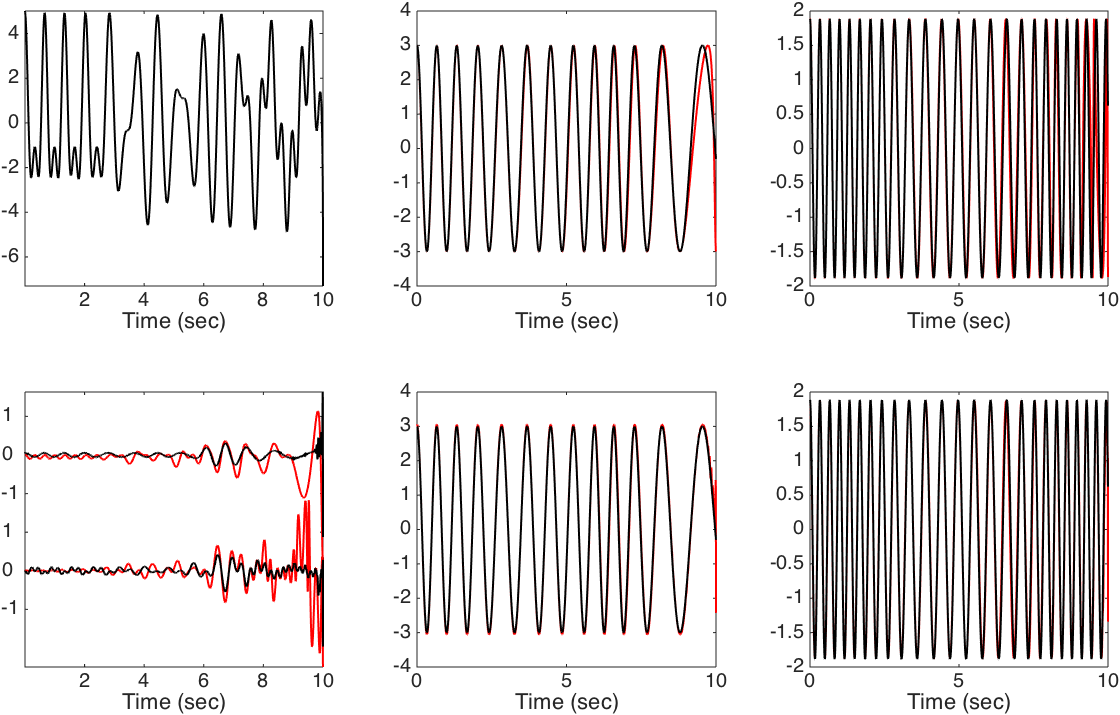}
\captionsetup{width=0.95\textwidth}
\caption{An illustration of the Blaschke decomposition of $f=f_1+f_2$, where $f_1$ and $f_2$ have close instantaneous frequencies. Top left: $f$; top middle: $f_1$ is shown in black and $\tilde{f}_1$ is shown in red; top right: $f_2$ is shown in black and $\tilde{f}_2$ is shown in red; bottom left: the upper red curve is $f_1-\tilde{f}_1$, the upper black curve is $f_1-\tilde{f}^c_1$, the lower red curve is $f_1-\tilde{f}_1$ and the lower black curve is $f_1-\tilde{f}^c_1$; bottom middle: $f_1$ is shown in black and $\tilde{f}^c_1$ is shown in red; bottom right: $f_2$ is shown in black and $\tilde{f}^c_2$ is shown in red. It is clear that the carrier frequency helps to increase the decomposition accuracy.}
\label{Example:1}
\end{figure}

Numerically, to avoid the boundary effect, we apply the following reflection trick in practice; that is, for the observed signal $f$ on time $[0,1]$, we analyze $\bar{f}$, which is defined on $[0,2]$ by
\[
\bar{f}|_{[0,1]}(t)=f(t);\,\bar{f}|_{(1,2]}(s)=f(2-s),
\]
and the final results come from restricting the decomposition on $[0,1]$. Note that in general, the signal might be recorded for a period $T>0$ longer than $1$ second with a sampling rate $K>0$ Hz. To analyze this kind of signal, we could rescale the signal to 1 second with the sampling rate $TK$, run the analysis, and scale back to the original length and sampling rate. Unless indicated differently, below we run the Blaschke decomposition with $D=1$ and $\epsilon=10^{-4}$.

We quickly summarize how we estimate IF by SST here. For a given function $f$ and a window function $h$ in the proper space, for example, $f$ is in the tempered distribution space, and $h$ is in the Schwartz space, STFT is defined as
\begin{equation}
V^{(h)}_f(t, \omega) = \int f(s) h(s-t)e^{-j2\pi\omega s} ds,
\label{eq: stft1}
\end{equation}
where $t\in\RR$ is the time and $\omega\geq 0$ is the frequency. It is well known that STFT is blurred due to the Heisenberg uncertainty principle. To sharpen the TFR determined by STFT, we could consider SST, which is a special reassignment technique \cite{Kodera_Gendrin_Villedary:1978,Auger_Flandrin:1995,Auger_Chassande-Mottin_Flandrin:2012}. SST counts on the {\em frequency reassignment rule} to sharpen the TFR, which is defined by:
\begin{align}
\Omega_f(t,\eta)=-\Im\frac{V_f^{(\mathcal{D}h)}(t,\eta)}{2\pi V_f^{(h)}(t,\eta)}\mbox{ when }  |V_f^{(h)}(t,\eta)|>\Theta\label{RM:omega}
\end{align}
and $\Omega_f(t,\eta)=-\infty$ otherwise,
where $\Im$ means taking the imaginary part, $\Theta\geq0$ is the chosen hard threshold and $\mathcal{D}h:=h'$, the first derivative of $h$.

\begin{figure}[ht]
\includegraphics[width=.95\textwidth]{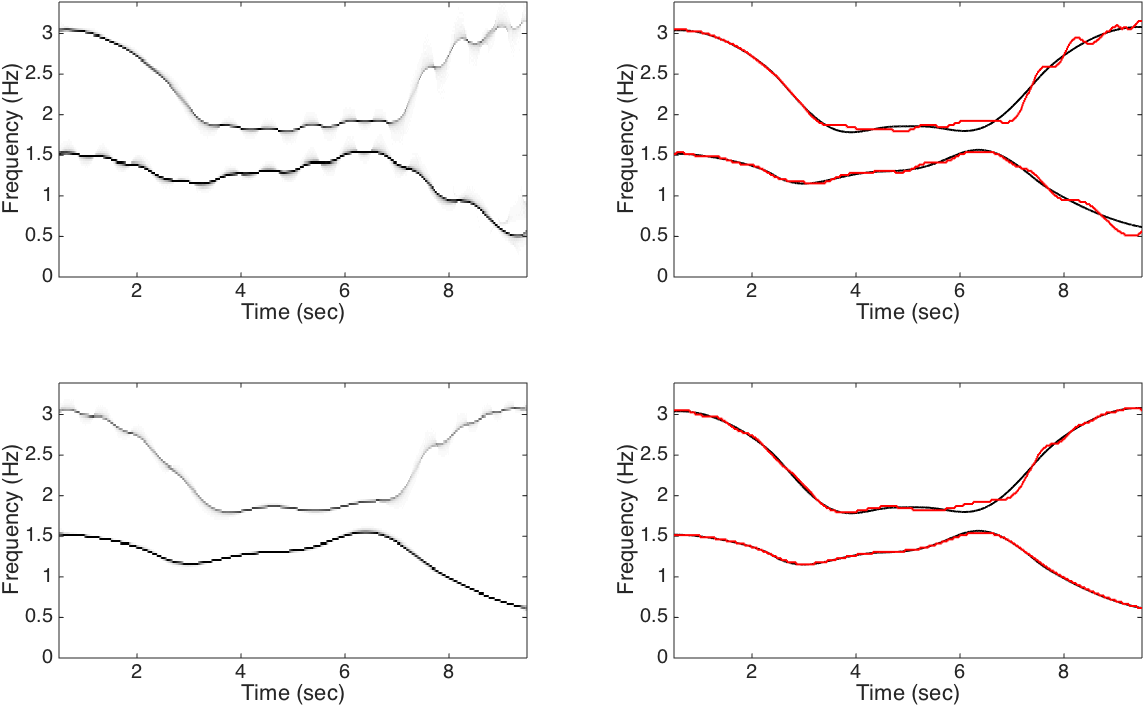}
\captionsetup{width=0.95\textwidth}
\caption{The Blaschke time-frequency (TF) representation $B_f$ is shown on the left upper subplot; the Blaschke TF representation $B_f^c$ is shown on the left lower subplot; right upper subplot: the true instantaneous frequency (IF) of $f_1$ and $f_2$ are shown as the black curves, and the estimated IF's from $f$ are shown as the red curves;  right lower subplot: the true IF's of $f_1$ and $f_2$ are shown as the black curves, and the estimated IF's from $f^c$ are shown as the red curves. We could see that the IF estimated from $f^c$ is more accurate.}
\label{Example:2}
\end{figure}

SST is defined by nonlinearly moving STFT coefficients {\em only} on the frequency axis guided by the frequency reassignment rule
\begin{align}\label{definition:SST}
S_f(t,\xi):=\int_{\{\eta:\,|V^{(h)}_f(t,\eta)|>\Theta\}}V^{(h)}_f(t,\eta)g_\alpha\left(|\xi+\eta-\Omega_f(t,\eta)|\right)\ud \eta,
\end{align}
where $t\in\RR$, $\xi\geq0$, $0<\alpha\ll 1$ is chosen by the user, $g_{\alpha}(\cdot):=\frac{1}{\alpha}g(\frac{\cdot}{\alpha})$ and $g$ is a smooth function so that $g_\alpha$ weakly converges to the Dirac measure as $\alpha\to 0$. The TF representation of an adaptive harmonic function determined by SST is highly concentrated on the location representing the IF, thus by any available curve extraction algorithm, we could accurately estimate the IF. To estimate the IF, we apply \cite[(15)]{Chen_Cheng_Wu:2014} with the penalty term for the regularity of IF weighted by $\lambda\geq 0$. The main reason we apply SST to estimate IF from a given oscillatory component is its robustness to different kinds of noise, even when the noise is non-stationary. This property has been extensively studied in \cite{Chen_Cheng_Wu:2014}. For the background, theoretical analysis and algorithmic details of SST, we refer the reader to, for example \cite{Daubechies_Lu_Wu:2011,Daubechies_Wang_Wu:2016}. Last but not least, we introduce the Blaschke TF representation. Given a $L^2$ function $f$ and its Blaschke decomposition $f=\sum_{k=1}^K\tilde{f}_k+R$, where $R$ is the remainder term, we define a new TF representation of $f$, called the \textit{Blaschke TF representation}, denoted as 
\[
B_f:=\sqrt{\sum_{k=1}^K|S_{f_k}|^2}. 
\]
Note that in general $B_f\neq |S_f|$ since SST is a nonlinear operator.
We mention that we could also consider RM or other nonlinear TF analysis techniques to estimate the IF, but in this paper we focus on SST.

\subsection{Two components with close IF's}

The first example shows that the Blaschke decomposition works well for the adaptive harmonic model.
Take $W$ to be the standard Wiener process defined on $\RR$ and define a {\it smoothed Wiener process with bandwidth $\sigma>0$} as
\begin{align}
\Phi_{\sigma}:=W\star g_{\sigma},
\end{align}
where $g_{\sigma}$ is the Gaussian function with the standard deviation $\sigma>0$ and $\star$ denotes the convolution operator. 
Take $L>0$, $\xi_0>0$ and $c_0\geq0$. Define the following random process $\phi^{(\xi_0,c_0)}$ on $[0,L]$ by
\begin{align}\label{Simulation:Phi}
\phi^{(\xi_0,c_0)}(t)=\xi_0 t+c_0\int_{0}^t \frac{\Phi_{\sigma_\phi}(s)}{\|\Phi_{\sigma_\phi}\|_{L^\infty[0,L]}}\ud s,\nonumber
\end{align}
where $t\in[0,L]$ and $\sigma_\phi>0$.
Note that $\phi^{(\xi_0,c_0)}$ is a monotonically increasing random process and in general there is no close form expression of $\phi^{(\xi_0,c_0)}$.

\begin{figure}[ht]
\includegraphics[width=.95\textwidth]{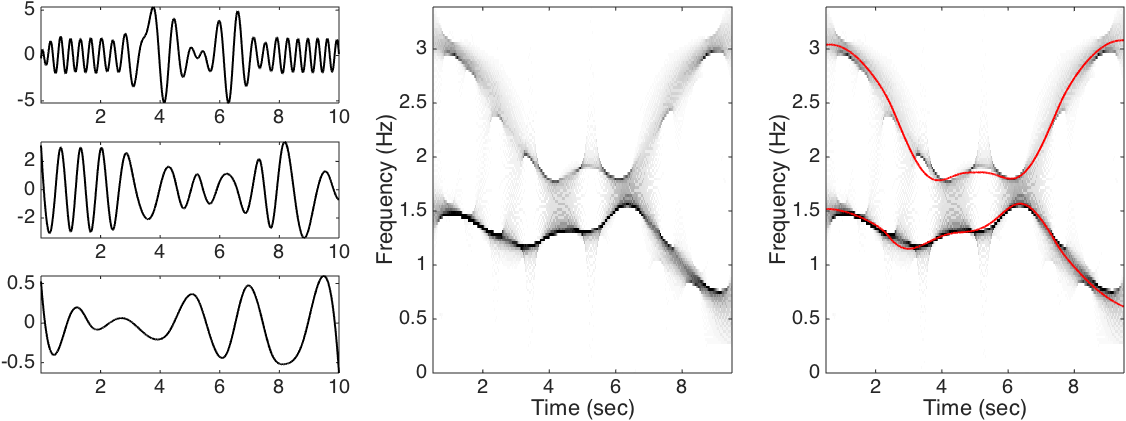}
\captionsetup{width=0.95\textwidth}
\caption{Left three subplots show the first three intrinsic mode functions (IMF) determined by the empirical mode decomposition (EMD). The time-frequency (TF) representation of $f$ determined by SST, denoted as $S_f$ is shown on the middle subplot. The right subplot shows $S_f$ superimposed with the truth instantaneous frequencies. Clearly, the first three IMFs determined by EMD are far from the ground truth. For SST, although we could obtain information about both components from $S_f$, when compared with $B_f$ shown in Figure \ref{Example:2}, there are interference pattern in $S_f$, in particular during the period $[4,7]$ second.}
\label{Example:3}
\end{figure}

We then generate two oscillatory components as
\begin{equation}
f_1(t)=e^{i2\pi \phi_1(t)},\quad
f_2(t)=e^{i2\pi \phi_2(t)},\quad
f(t)=f_1(t)+f_2(t),\label{Equation:Definiteion:Simulation:Clean}
\end{equation}
where $\phi_1(t)$ is sampled from $\phi^{(\pi/2,1)}$ and $\phi_2(t)$ is independently sampled from $\phi^{(3,1)}$. 
The goal is to decompose $f_1(t)$ and $f_2(t)$ from $f$ and estimate the IF's $\phi'_1(t)$ and $\phi'_2(t)$. We choose both components to be of constant amplitudes to show the results to simplify the discussion.
Denote $\tilde{f}_i(t)$, $i=1,2$ as the $i$-th decomposed components of $f(t)$ by the Blaschke decomposition, and denote $\tilde{\phi}'_i(t)$ as the estimated IF of $\phi'_i(t)$. 
To show the implication of Theorem \ref{Theorem:CarrierPhase} that the carrier frequency helps the estimation accuracy of the Blaschke decomposition, we consider $f^c(t):=f(t)e^{i2\pi \xi_0t}$, where $f^c$ is $f$ with a $\xi_0$ Hz carrier frequency and $\xi_0\geq 0$. Denote $\tilde{f}^{c0}_i(t)$ as the $i$-th decomposed components of $f^c$ by the Blaschke decomposition. The decomposed components of $f$ by taking the carrier frequency into account are thus $\tilde{f}^c_i(t):=\tilde{f}^{c0}_i(t)e^{-i2\pi\xi_0 t}$. The estimated IF of $\phi'_i(t)$ from $\tilde{f}^c_i(t)$ is denoted as $\tilde{\phi}^{\prime c}_i(t)$. In this example, we choose $\xi_0=20$.  

Numerically, we take $\sigma_\phi=200$, $L=10$ and sample $f(t)$ at the sampling rate $512$ Hz. For SST, we set $\Theta=10^{-4}\%$ of the root mean square energy of the signal under analysis and $\alpha$ small enough so that $g_\alpha$ is implemented as a discretization of the Dirac measure. The frequency axis is uniformly discretized into $0.0128$Hz. When we estimate IF, we take $\lambda=1$. 
The decomposition results are shown in Figures \ref{Example:1}. Visually, the decomposition fits the ground truth well, and the carrier frequency helps to increase the decomposition accuracy.
To quantify how well the decomposition is, we consider the following measurement. For the estimator of the quantity $f_i(t)$, denoted as $\tilde{f}_i(t)$, where $i=1,2$, the error ratio (ER) is defined as
\[
\text{ER}(\tilde{f}_i)=\frac{\|\tilde{f}_i-f_i\|_{L^2}}{\|f_i\|_{L^2}}.
\]
We use the same quantity to evaluate the estimator of the IF. Then we have that $\text{ER}(\tilde{f}_1)=0.16$, $\text{ER}(\tilde{f}_2)=0.31$,
 $\text{ER}(\tilde{f}^c_1)=0.05$, and $\text{ER}(\tilde{f}^c_2)=0.1$. We could see the improved accuracy by the carrier frequency.
In Figure \ref{Example:2}, the SST is applied to $\tilde{f}_1$ and $\tilde{f}_2$ separately to obtain $S_{\tilde{f}_1}$ and $S_{\tilde{f}_2}$ and hence the IF's of $\tilde{f}_1$ and $\tilde{f}_2$. 
Similarly, we could apply the same procedure to $f^c$, and determine the IF's of $\tilde{f}^c_1$ and $\tilde{f}^c_2$, as well as the associated Blaschke TF representation, denoted as $B_f^c$.
We could see that each component is well approximated, and hence its IF, and the improvement is clear when we apply the 20Hz carrier frequency -- $\text{ER}(\tilde{\phi}'_1)=0.035$, $\text{ER}(\tilde{\phi}'_2)=0.031$,
 $\text{ER}(\tilde{\phi}^{\prime c}_1)=0.019$, and $\text{ER}(\tilde{\phi}^{\prime c}_2)=0.017$.
We mention that this is a challenging example since the IF's of the two components are very close during the period $[4,7]$. We mention that to decompose $f$ to $f_1$ and $f_2$ by the other time-frequency analysis techniques, a precise choice of the window is needed due to the close IF. For example, the well-known EMD fails and the decomposition is far deviated from the ground truth; the SST is impacted by the interference caused by the close IF's. See Figure \ref{Example:3} for an example. With the Blaschke decomposition, we could alleviate this limitation and get the precise information we have interest.

\begin{figure}[h!]
\includegraphics[width=.95\textwidth]{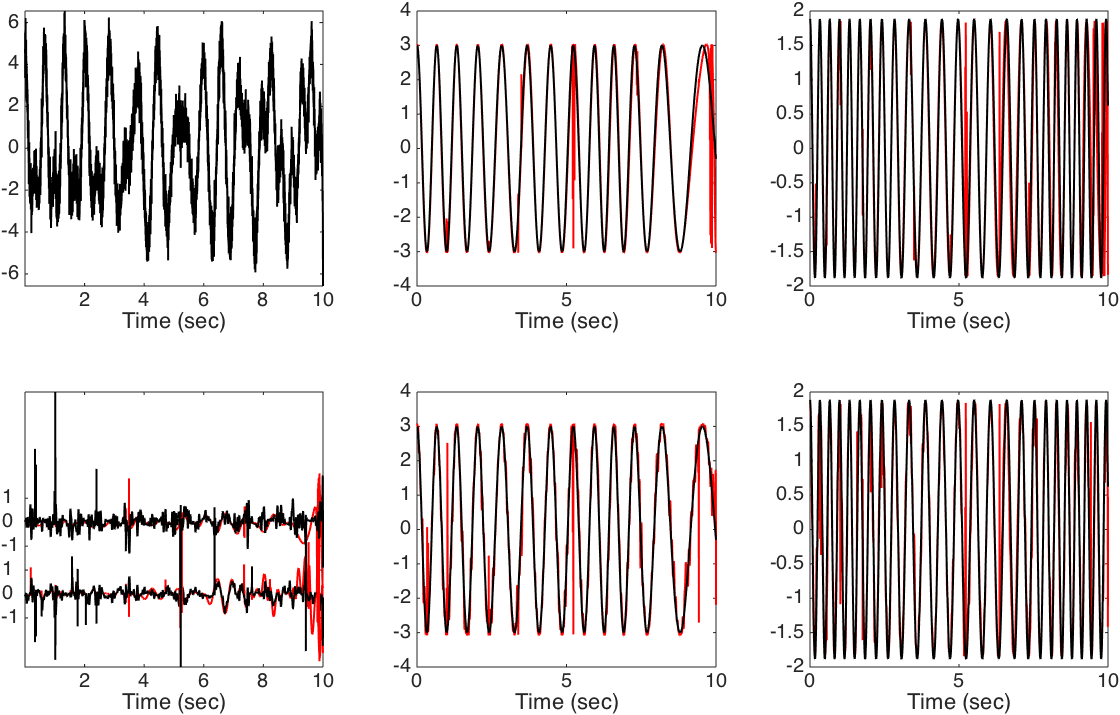}
\captionsetup{width=0.95\textwidth}
\caption{An illustration of the Blaschke decomposition of $Y=f+\sigma\xi$. Top left: $Y$; top middle: $f_1$ is shown in black and $\tilde{Y}_1$ is shown in red; top right: $f_2$ is shown in black and $\tilde{Y}_2$ is shown in red; bottom left: the upper red curve is $f_1-\tilde{f}_1$, the upper black curve is $f_1-\tilde{Y}^c_1$, the lower red curve is $f_1-\tilde{f}_1$ and the lower black curve is $f_1-\tilde{Y}^c_1$; bottom middle: $f_1$ is shown in black and $\tilde{Y}^c_1$ is shown in red; bottom right: $f_2$ is shown in black and $\tilde{Y}^c_2$ is shown in red. It is clear that the carrier frequency helps to increase the decomposition accuracy.}
\label{Example:4}
\end{figure}

\subsection{Stability under White Noise}

In this subsection, we show the stability of the Blaschke decomposition to different kinds of noises. Firstly, we consider the additive noise $Y(t)=f(t)+\sigma \xi(t)$, where $f$ is defined in (\ref{Equation:Definiteion:Simulation:Clean}), $\xi(t)$ is a Gaussian white noise with mean 0 and standard deviation 1, and $\sigma>0$ is chosen so that the signal to noise ratio (SNR) is 10, where SNR is defined as $20\log\frac{\text{std}(f)}{\sigma}$ and std means the standard deviation. Again, consider $Y^c(t)=e^{i2\pi\xi_0 t}Y(t)$ with $\xi_0=20$ Hz. 
Secondly, we consider the multiplicative noise $Z(t)=f(t)e^{\xi(t)/2}$, where $\xi$ is the same noise as that in $Y(t)$. Again, consider $Z^c(t)=e^{i2\pi\xi_0 t}Z(t)$. 

Denote $\tilde{Y}_i(t)$, $i=1,2$ as the $i$-th decomposed components of $Y(t)$ by the Blaschke decomposition. 
Similarly, denote $\tilde{Y}^{c0}_i(t)$ as the $i$-th decomposed components of $Y^c$ by the Blaschke decomposition, and hence we have $\tilde{Y}^{c}_i(t)=e^{-i2\pi\xi_0 t}Y^{c0}_i(t)$ as the $i$-th estimated components of $Y(t)$. The estimated IF of $\phi'_i(t)$ from $\tilde{Y}_i(t)$ and $\tilde{Y}^c_i(t)$ are denoted as $\tilde{\phi}^{\prime}_{i,Y}(t)$ and $\tilde{\phi}^{\prime c}_{i,Y}(t)$ respectively.
The same notations are applied to the decomposition of $Z(t)$.

\begin{figure}[h!]
\includegraphics[width=.9\textwidth]{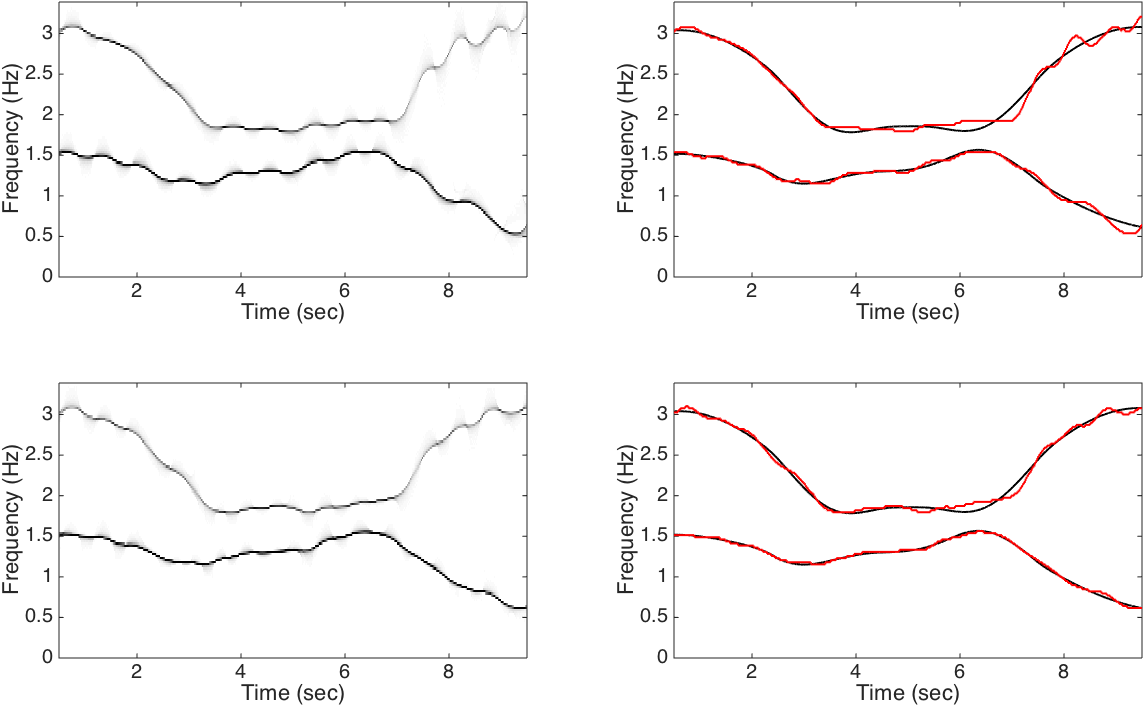}
\captionsetup{width=0.95\textwidth}
\caption{The time-frequency (TF) representation $R_Y$ is shown on the left upper subplot; the TF representation $R_Y^c$ determined from $Y^c$ is shown on the left lower subplot; right upper subplot: the true instantaneous frequency (IF) of $f_1$ and $f_2$ are shown as the black curves, and the estimated IF's from $Y$ are shown as the red curves;  right lower subplot: the true IF's of $f_1$ and $f_2$ are shown as the black curves, and the estimated IF's from $Y^c$ are shown as the red curves. We could see that the IF estimated from $Y^c$ is more accurate.}
\label{Example:5}
\end{figure}

\begin{figure}[h!]
\includegraphics[width=.9\textwidth]{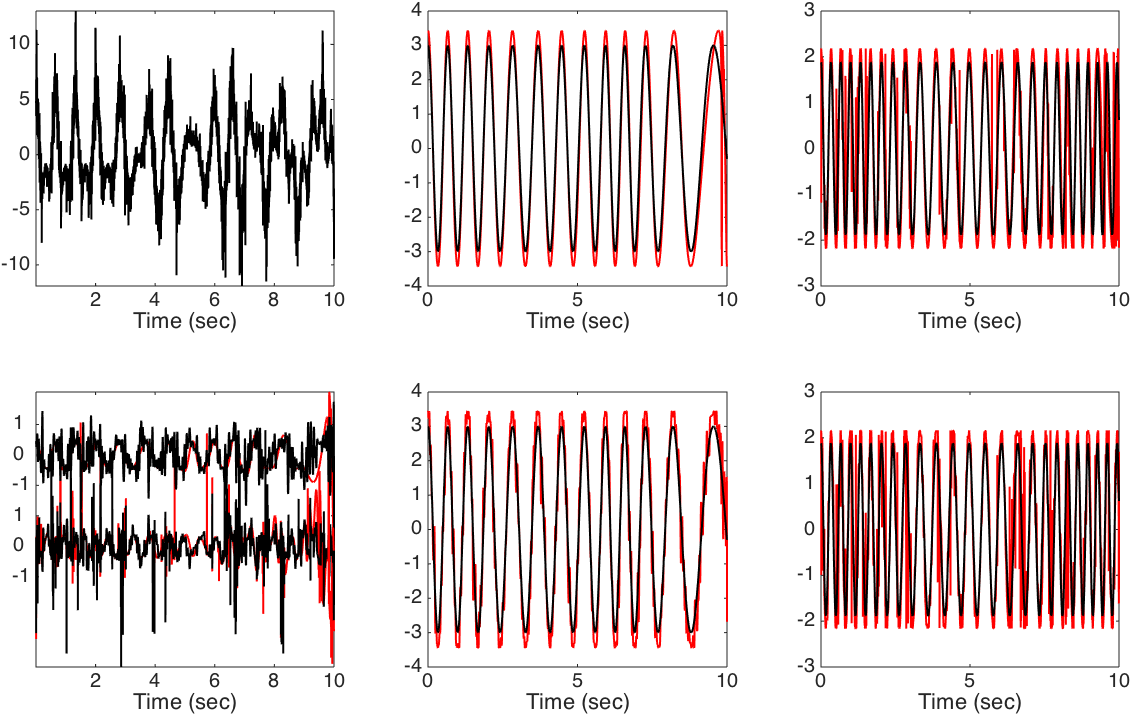}
\captionsetup{width=0.95\textwidth}
\caption{An illustration of the Blaschke decomposition of $Z(t)=f(t)e^{\sigma\xi}$. Top left: $Z$; top middle: $f_1$ is shown in black and $\tilde{Z}_1$ is shown in red; top right: $f_2$ is shown in black and $\tilde{Z}_2$ is shown in red; bottom left: the upper red curve is $f_1-\tilde{Z}_1$, the upper black curve is $f_1-\tilde{Z}^c_1$, the lower red curve is $f_1-\tilde{Z}_1$ and the lower black curve is $f_1-\tilde{Z}^c_1$; bottom middle: $f_1$ is shown in black and $\tilde{Z}^c_1$ is shown in red; bottom right: $f_2$ is shown in black and $\tilde{Z}^c_2$ is shown in red. It is clear that the carrier frequency helps to increase the decomposition accuracy.}
\label{Example:6}
\end{figure}

The decomposition results of one realization of $Y(t)$ (respectively $Z(t)$) are shown in Figures \ref{Example:4} and \ref{Example:5} (respectively Figures \ref{Example:6} and \ref{Example:7}). We could see that under the additive noise, the components obtained from the Blaschke decomposition are contaminated by some ``shot noise'', and this artifacts still exist when we apply the frequency carrier. While the decomposition is contaminated by the shot noise, due to the robustness of SST, the Blaschke TF representation is relatively clean. We could thus see the benefit of combining the Blaschke decomposition and SST. On the other hand, under the multiplicative noise, in addition to the shot noise artifact, the components obtained from the Blaschke decomposition have larger amplitude. This is caused by the amplitude perturbation induced by the multiplicative noise $e^{\xi/2}$, which has a strictly positive mean. Again, due to the robustness of SST, the Blaschke TF representation is relatively clean.

To quantify how the noise influences the result, we repeat the decomposition for 100 independent realizations of the noise, and report the 100 SER's as mean$\pm$std. The SER's for the decomposition of $Y$ are $\text{ER}(\tilde{Y}_1)=0.18\pm 0.01$, $\text{ER}(\tilde{Y}_2)=0.3\pm 0.01$, $\text{ER}(\tilde{Y}^c_1)=0.16\pm 0.01$, and $\text{ER}(\tilde{Y}^c_2)=0.19\pm 0.01$. To test if one estimator is better than the other, we apply the Mann-Whitney test and view $p<0.01$ as statistically significant. The testing results show that the carrier frequency technique does help with statistical significance. The SER's of the estimated IF are $\text{ER}(\tilde{\phi}'_{1,Y})=0.034\pm0.002$, $\text{ER}(\tilde{\phi}'_{2,Y})=0.03\pm0.002$,
 $\text{ER}(\tilde{\phi}^{\prime c}_{1,Y})=0.023\pm0.003$, and $\text{ER}(\tilde{\phi}^{\prime c}_{2,Y})=0.019\pm0.002$. The improvement of the IF estimation by the carrier frequency method is statistically significant.

The SER's for the decomposition of 100 realizations of $Z$ are $\text{ER}(\tilde{Z}_1)=0.22\pm 0.01$, $\text{ER}(\tilde{Z}_2)=0.39\pm 0.02$, $\text{ER}(\tilde{Z}^c_1)=0.23\pm 0.01$, and $\text{ER}(\tilde{Z}^c_2)=0.33\pm 0.02$. In this example,  with statistical significance, the carrier frequency technique performs better when extracting the second component, while it performs worse when extracting the first component. The SER's of the estimated IF are $\text{ER}(\tilde{\phi}'_{1,Z})=0.036\pm0.002$, $\text{ER}(\tilde{\phi}'_{2,Z})=0.031\pm0.004$,
 $\text{ER}(\tilde{\phi}^{\prime c}_{1,Z})=0.022\pm0.003$, and $\text{ER}(\tilde{\phi}^{\prime c}_{2,Z})=0.022\pm0.002$. Again, the improvement of the IF estimation by the carrier frequency method is statistically significant.

\begin{figure}[ht]
\includegraphics[width=.95\textwidth]{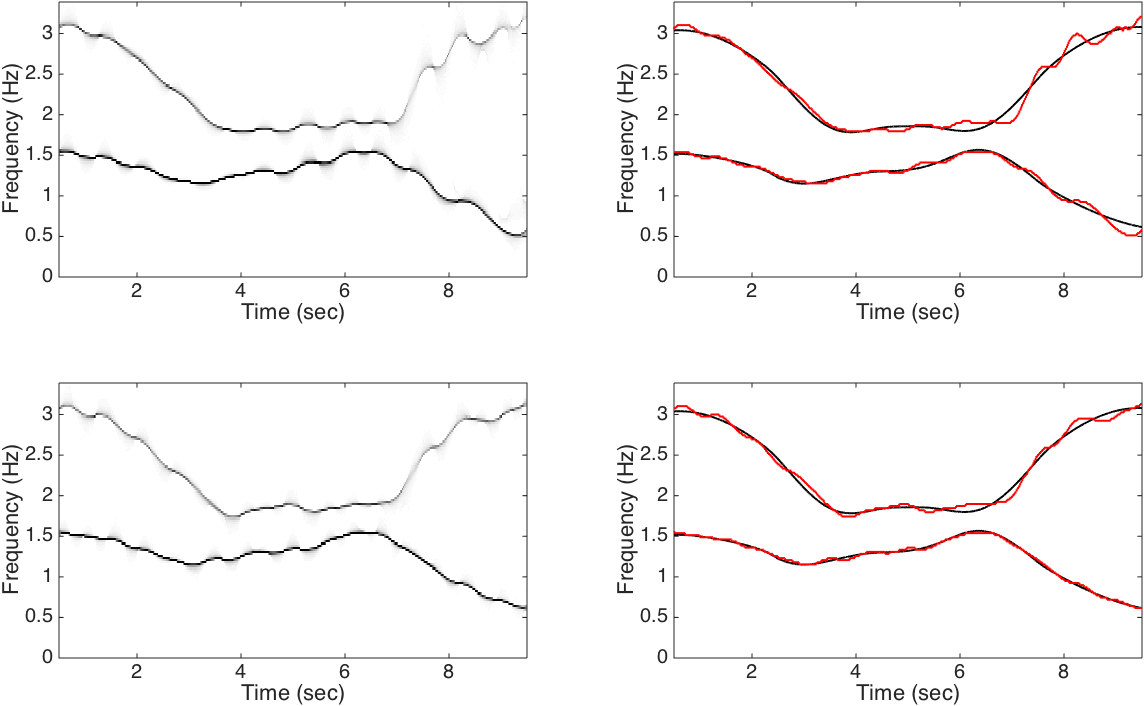}
\captionsetup{width=0.95\textwidth}
\caption{The time-frequency (TF) representation $R_Z$ is shown on the left upper subplot; the TF representation $R_Z^c$ is shown on the left lower subplot; right upper subplot: the true instantaneous frequency (IF) of $f_1$ and $f_2$ are shown as the black curves, and the estimated IF's from $Z$ are shown as the red curves;  right lower subplot: the true IF's of $f_1$ and $f_2$ are shown as the black curves, and the estimated IF's from $Z^c$ are shown as the red curves. We could see that the IF estimated from $Z^c$ is more accurate.}
\label{Example:7}
\end{figure}

\subsection{Respiratory signal analysis}

In the past decades, more and more clinical researches focus on extracting possible hidden dynamics from the respiratory signal, which are not easily observed by our naked eyes. The quantity IF has been shown to be a successful surrogate for the breathing rate variability, which helps clinicians' decision making in several problems like the ventilator weaning \cite{Wu_Hseu_Bien_Kou_Daubechies:2013}, sleep dynamics detection \cite{Wu_Talmon_Lo:2015},  etc. We now demonstrate the Blaschke decomposition result from a respiratory signal recorded from a subject when the subject is under general anesthesia. The signal lasts for 3 minutes and is sampled at 25 Hz. Since the amplitude of the respiratory signal is not constant, we take $D=5$ when we apply the Blaschke decomposition. We also apply $5$ Hz carrier frequency to analyze the data. 

The results are shown in Figure \ref{Example:Resp}. The first decomposed component well reflects how fast the flow signal oscillates; that is, the IF information of the flow signal could be obtained from it. Presumably, the second decomposed component could be the ``multiple'' of the first component, and we could see the unstable ``shot noise'' around, which might come from the inevitable noise. We could see that the Blaschke TF representation is cleaner than the SST TF representation, since the interference issue in SST is reduced by taking the Blaschke decomposition into account. \newpage

\begin{figure}[h!]
\includegraphics[width=.95\textwidth]{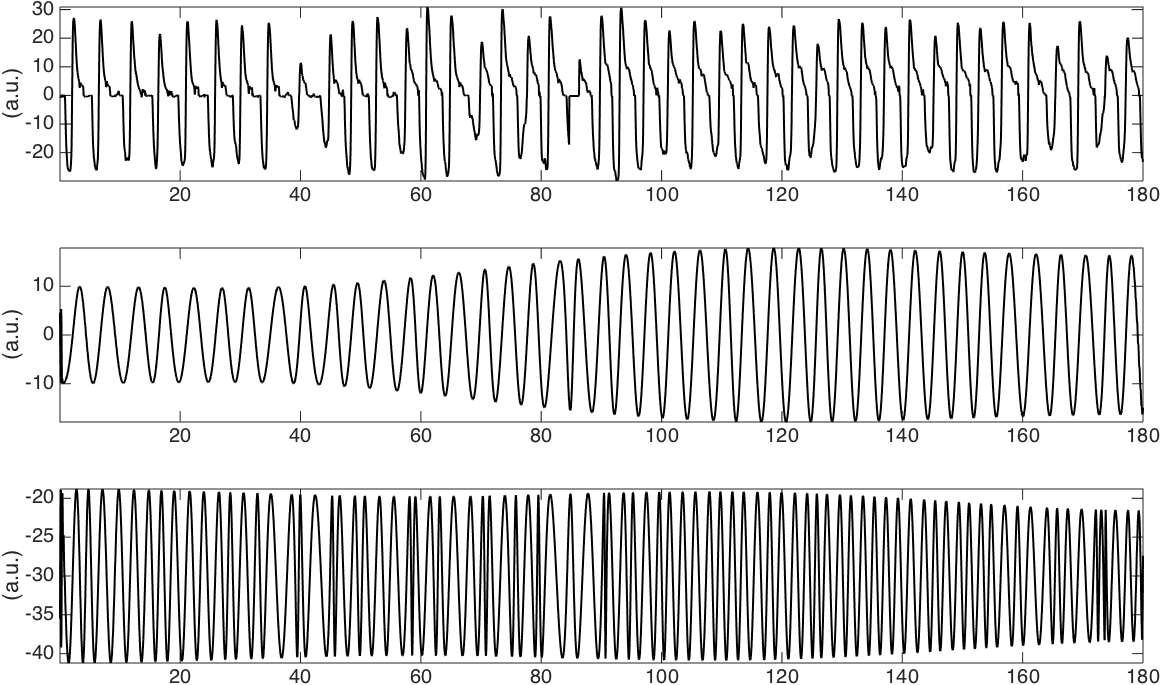}
\includegraphics[width=.95\textwidth]{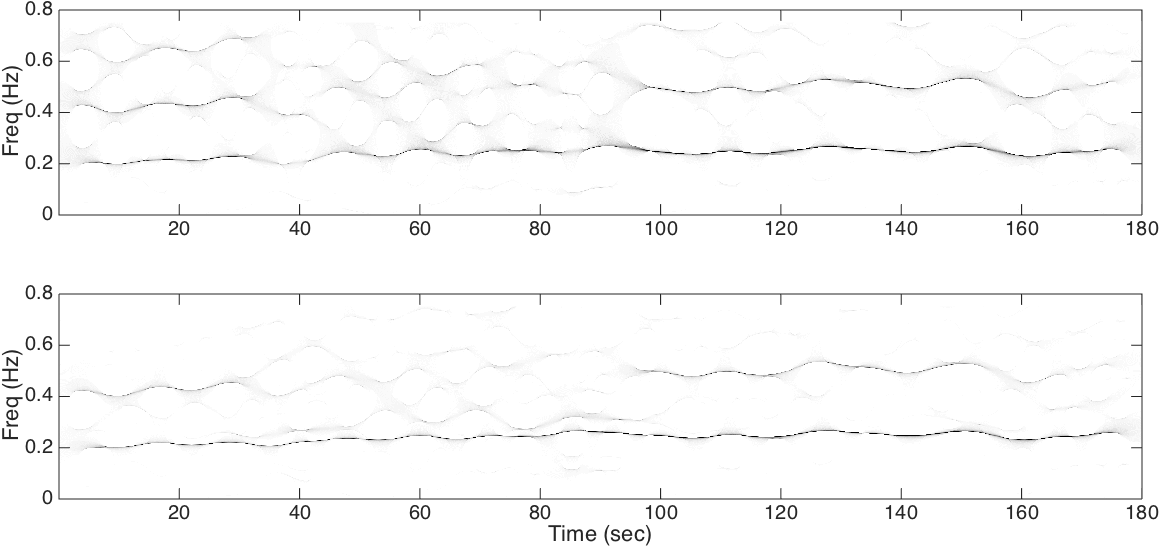}
\captionsetup{width=0.95\textwidth}
\caption{From top to bottom: the respiratory signal $f$; the first component decomposed from $f$ by the Blaschke decomposition; the second component decomposed from $f$ by the Blaschke decomposition; the SST time-frequency (TF) representation of $f$; the Blaschke TF representation of $f$.}
\label{Example:Resp}
\end{figure}

\subsection{Gravity wave example}

Gravitational waves are ripples in space-time produced by some of the most violent events in the cosmos, which is predicted by Albert Einstein in 1916 via the general theory of relativity. The gravitational-wave signal GW150914 was observed on September 14, 2015 by the two detectors of the Advanced Laser Interferometer Gravitational-wave Observatory (LIGO). GW150914 is the first direct observation of a pair of black holes merging to form a new single black hole, and how sure it is a real astrophysical event has been extensively discussed and confirmed \cite{PhysRevLett.116.061102QQ}. The signals collected from Hanford, Washington (denoted as H1 signal) and Livingston, Louisiana (denoted as L1 signal) are available for download from \url{https://losc.ligo.org/events/GW150914/}, where details of how the signals are collected, the argument why the signals are correct, and the theoretical explanation and background are also available. Both signals last for 0.21 second and are sampled at 16,384 Hz.

\begin{figure}[h!]
\includegraphics[width=.93\textwidth]{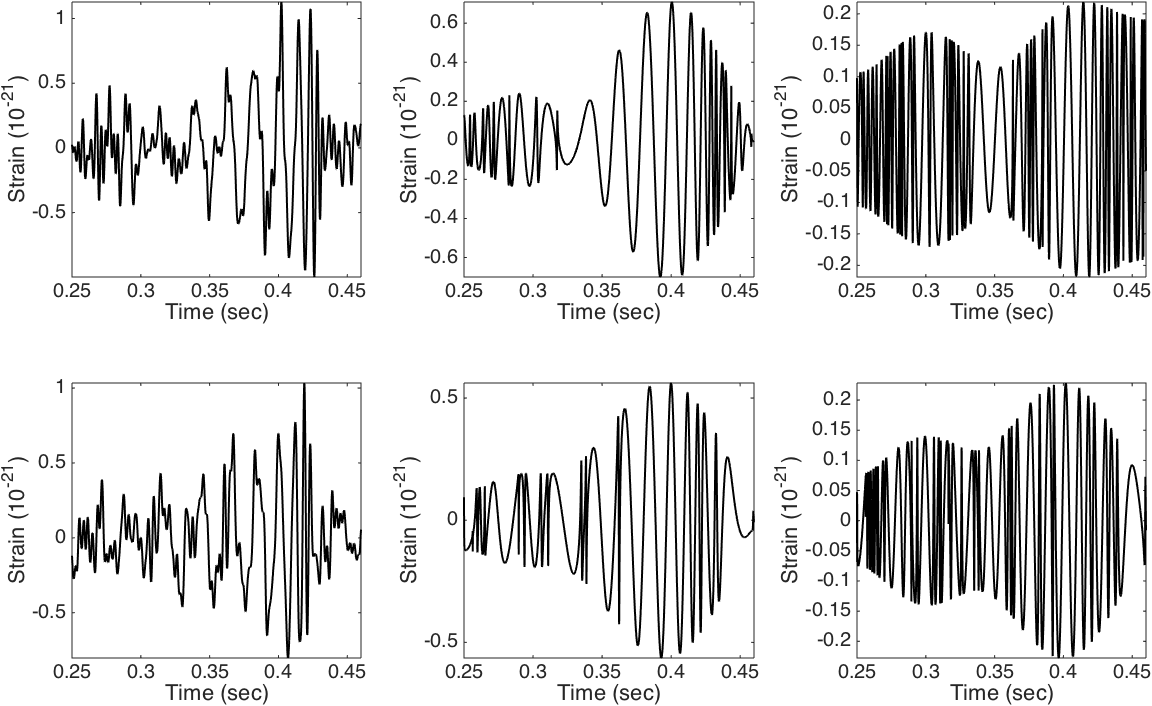}
\captionsetup{width=0.95\textwidth}
\caption{Top row: from left to right are the H1, the first decomposed component of H1, and the second decomposed component of H1; bottom row: from left to right are the L1, the first decomposed component of L1, and the second decomposed component of L1.}
\label{Example:LIGO1}
\end{figure}

\begin{figure}[h!]
\includegraphics[width=.93\textwidth]{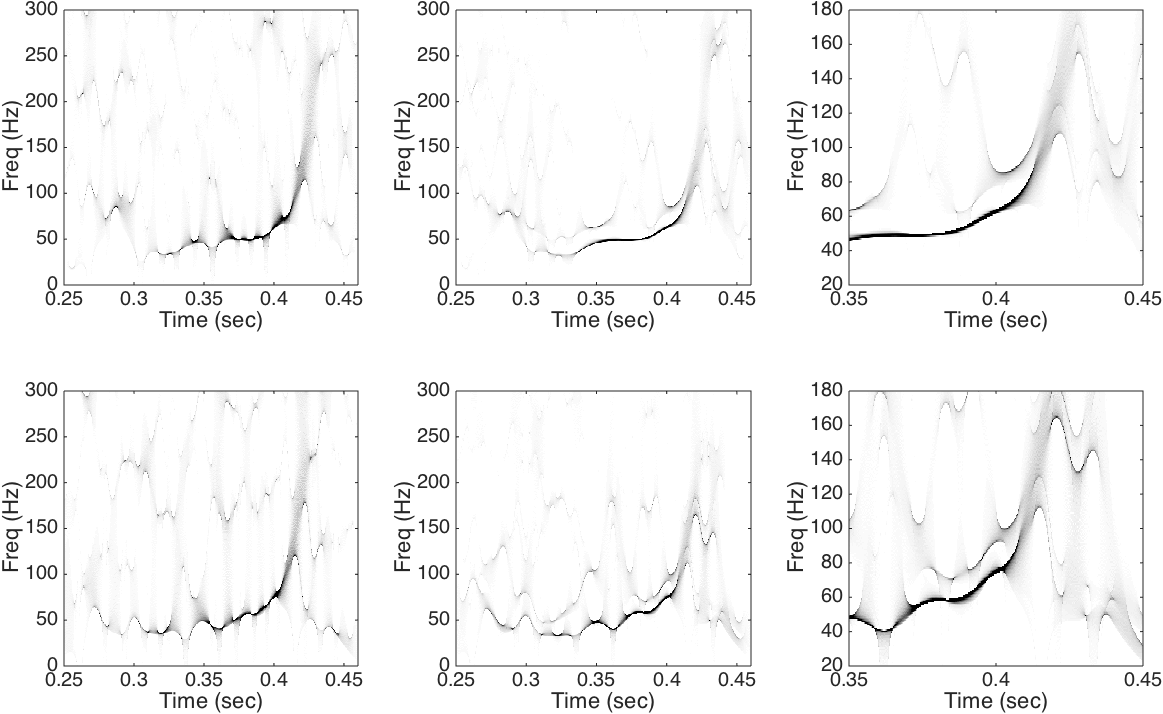}
\captionsetup{width=0.95\textwidth}
\caption{Top row: from left to right are the time-frequency (TF) representation of H1 determined by SST, the Blaschke TF representation of H1, and the zoomed in Blaschke TF representation of H1; bottom row: from left to right are the TF representation of L1 determined by SST, the Blaschke TF representation of L1, and the zoomed in Blaschke TF representation of L1.}
\label{Example:LIGO2}
\end{figure}

We now apply the Blaschke decomposition to analyze H1 and L1 signals, and show the TF representation determined by SST and the Blaschke TF representation. Since the amplitude of the signal varies, we apply the Blaschke decomposition with $D=5$. 
The decomposition results are shown in Figures \ref{Example:LIGO1}. We could see the chirp behavior after 0.35 seconds in both signals. The decomposed signals seem to have several ``phase jumps'', for example around 0.3 second of the first decomposed component of both H1 and L1. These phase jumps could be explained by the inevitable noise in the recorded signal.   
The SST TF representations and the Blaschke TF representations of of H1 and L1 are shown in \ref{Example:LIGO2}. For both H1 and L1, compared with the TF representation determined by SST, we could see that the Blaschke TF representation is sharper with less ``background speckles'' in the TF representation. Note that while the findings match the reported TF representations determined by CWT in \cite{PhysRevLett.116.061102QQ}, to further explore the results shown here and the potential of the proposed method, we need extensive collaborations with field experts, and the results will be reported in the future work.

\subsection{Detecting Roots}\label{Section:Numerics:RootDetection}
In this last subsection, we show how the Blaschke decomposition reveals the roots of a given analytic function $f:\mathbb{T}\to\mathbb{C}$, when combined with the Poisson convolution.

\begin{figure}[h!]
\includegraphics[width=.95\textwidth]{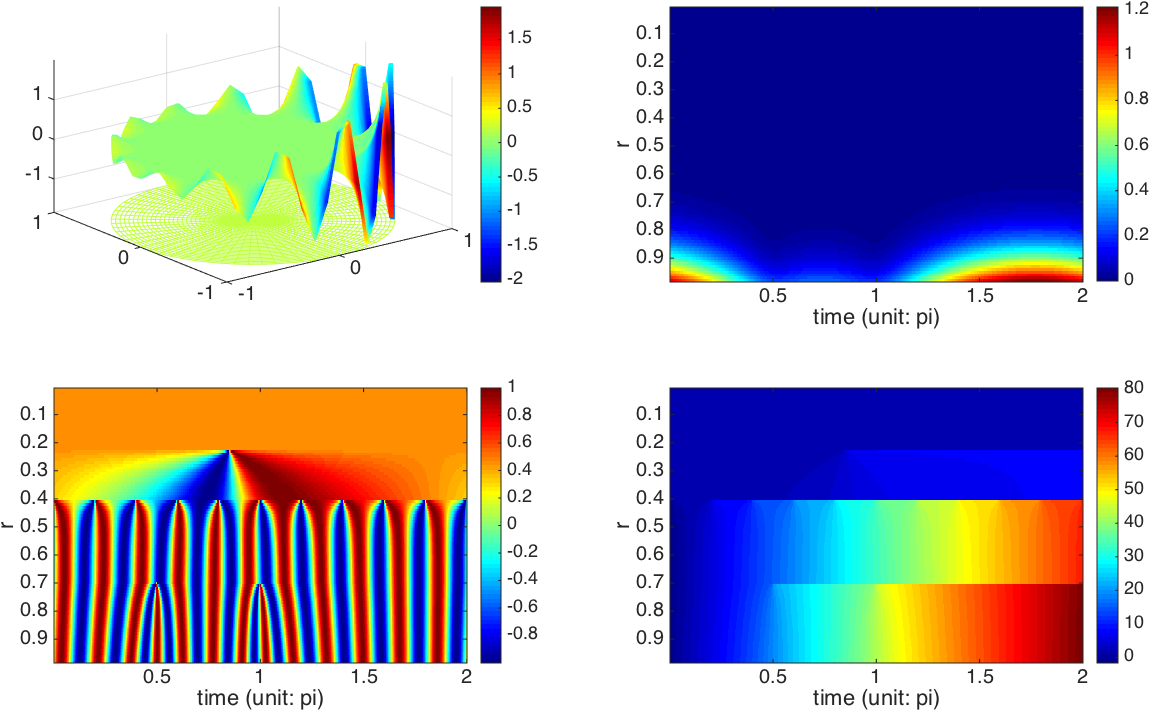}
\captionsetup{width=0.95\textwidth}
\caption{Left upper: the Poisson integral of $f$ defined by $f(e^{i\theta})=(e^{i10\theta}-0.4^{10})(e^{i\theta}-0.7i)(e^{i\theta}+0.7)(e^{i\theta}+0.1+0.2i)$, where $\theta\in\mathbb{T}$ on the unit disk. The z-axis indicates the real part of the Poisson integral and the color indicates the phase ranging from $-\pi$ to $\pi$. Right upper: the absolute value of $u_r$, where $r\in[0.01,0.99]$. Left bottom: the real part of $B_r$. Right bottom: the phase of $B_r$. Clearly it is not easy to read the root location from the absolute value of $u_r$, but the root location information can be clearly seen from reading the ``transition'' in the phase of $B_r$.}
\label{Example:Root1}
\end{figure}

 For $0\leq r<1$, denote $$u_r(\theta)=\frac{1}{2\pi}\int^{\pi}_{-\pi}\frac{1-r^2}{1-2r\cos(\theta-t)+r^2}f(e^{it})dt,$$ where $\theta\in\mathbb{T}$ and the Blaschke decomposition of $u_r=B_rG_r$. Consider $f(e^{i\theta})=(e^{i10\theta}-0.4^{10})(e^{i\theta}-0.7i)(e^{i\theta}+0.7)(e^{i\theta}+0.1+0.2i)$, where we have 10 roots on the circle of radius $0.4$, 2 roots on the circle of radius $0.7$ and one root on the circle of radius $0.224$. According to the discussion in Section \ref{Section:RootDetection}, when we apply the Blaschke decomposition, we should see a ``transition'' from $B_{r+\delta}$ to $B_{r-\delta}$, where $0<\delta\ll1$, where $r=0.7, 0.4$ and $0.224$.

We sample $1024$ points from $f$ uniformly on $\mathbb{T}$, and evaluate $u_r$ by the Poisson integral directly by evaluating the Rieman sum, where $r$ ranges from $0.01$ to $0.99$ with the uniform grid length $0.01$.
For each $u_r$, we apply the Blaschke decomposition to evaluate $B_r$. The results of the Poisson integral, the Blaschke decomposition are shown in Figure \ref{Example:Root1}. Clearly, it is not easy to directly read the root location from $u_r$. However, the root location information could be clearly seen from reading the ``transition'' in the phase of $B_r$. Particularly, when $r>0.7$ the phase of $B_r$ increases up to $81$, while the phase of $B_r$ increases only up to $70$ when $0.7>r>0.4$ and so on. The real part of $B_r$, when $r>0.7$ indicates that $B_r$ oscillates with a fast-varying IF around time $0.5$ and $1$, which reflects the roots at $0.7i$ and $-0.7$. When $0.4<r<0.7$, $B_r$ also oscillates with a non-constant IF, but the IF varies more slowly compared with the $B_r$ when $r>0.7$. This observation reflects the fact that the non-constant IF is captured by the non-zero roots, and the farther the root is to $0$, the faster the IF varies. 
\begin{figure}[h!]
\includegraphics[width=.95\textwidth]{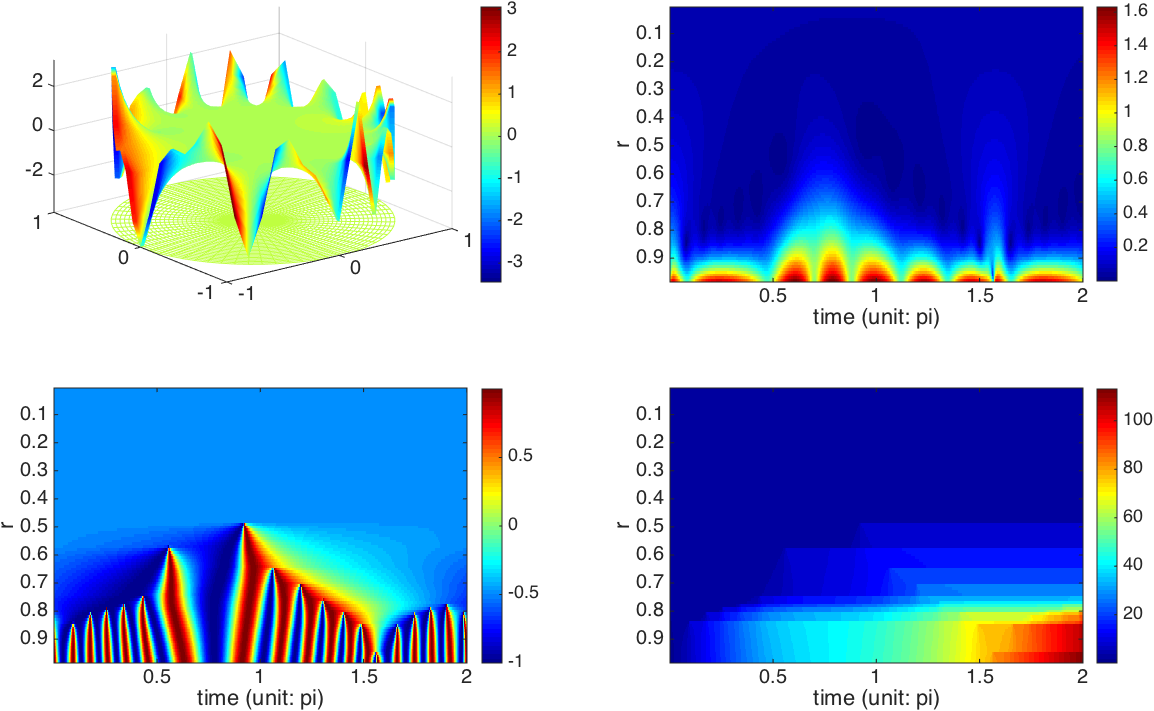}
\captionsetup{width=0.95\textwidth}
\caption{Left upper: the Poisson integral of $f$ defined in (\ref{Equation:Definiteion:Simulation:Clean}) on the unit disk. The $z$-axis indicates the real part of the Poisson integral and the color indicates the phase ranging from $-\pi$ to $\pi$. Right upper: the absolute value of $u_r$, where $r\in[0.01,0.99]$. Left bottom: the real part of $B_r$. Right bottom: the phase of $B_r$. Clearly it is not easy to read the root location from the absolute value of $u_r$, but the root location information could be clearly seen from reading the ``transition'' in the phase of $B_r$.}
\label{Example:Root2}
\end{figure}

Next, we show the result for $f(e^{i\theta})$ defined in (\ref{Equation:Definiteion:Simulation:Clean}). We sample $f$ at the sampling rate of 128Hz and $L=8$; that is, we sample $1024$ points. The results of the Poisson integral, the Blaschke decomposition are shown in Figure \ref{Example:Root2}. It is clear again that from reading the Poisson integral, little information about roots could be directly obtained. However, we could see several transitions across different $r$ in the phase plot of $B_r$, which provides roots' locations.   
While the exact relationship between roots and the analytic function satisfying the adaptive harmonic model is still open, this result shows the potential of applying the Blaschke decomposition to detect the roots of a given analytic function.
For example, the nonlinear ``strips'' in the real of $B_r$ when $r$ changes might help quantify the relationship. The study of this relationship will be reported in the future work.

\bibliography{Blaschke}
\bibliographystyle{plain}

\end{document}